\title[Non-separable mean field games]
{Existence theory for  non-separable mean field games in Sobolev spaces}
\author{David M. Ambrose}
\address{Drexel University, Department of Mathematics, Philadelphia, PA, USA}
\email{dma68@drexel.edu}
\newtheorem{theorem}{Theorem}
\newtheorem{lemma}[theorem]{Lemma}
\newtheorem{corollary}[theorem]{Corollary}
\newtheorem{remark}{Remark}
\begin{document}

\begin{abstract} The mean field games system is a coupled pair of nonlinear partial differential equations
arising in differential game theory, as a limit as the number of agents tends to infinity.
We prove existence and uniqueness of classical solutions for time-dependent mean field
games with Sobolev data.  Many works in the literature assume additive separability of the Hamiltonian,
as well as further structure such as convexity and monotonicity of the resulting components.
Problems arising in practice, however, may not have this separable structure; we therefore consider
the non-separable problem.  For our existence and uniqueness results, 
we introduce new smallness constraints which simultaneously consider the 
size of the time horizon,
the size of the data, and the strength of the coupling in the system.  
\end{abstract}

\maketitle

\section{Introduction}

Mean field games have been introduced in the mathematics literature by Lasry and Lions
as limits of problems from game theory, as the number of agents tends to infinity \cite{lasry1},
\cite{lasry2}, \cite{lasry3}.  From a control theory perspective, mean field games were also introduced
around the same time by Huang, Caines, and Malhame \cite{huang1}, \cite{huang2}.
The mean field games system of partial differential equations is the following coupled system for a
value function, $u,$ and a probability measure, $m:$
\begin{equation}\label{uEquation}
u_{t}+\Delta u +\mathcal{H}(t,x,m,Du)=0,
\end{equation}
\begin{equation}\label{mEquation}
m_{t}-\Delta m +\mathrm{div}\left(m\mathcal{H}_{p}(t,x,m,Du)\right)=0,
\end{equation}
with $x\in\mathbb{T}^{d}$ and $t\in[0,T],$ for some given $T>0.$  
The function $\mathcal{H}$ is known as the Hamiltonian, and we treat the local case, in which 
$\mathcal{H}:\mathbb{R}^{2d+2}\rightarrow\mathbb{R}$ is a function
of its arguments, as opposed to the nonlocal case in which it might involve an integral operator applied to the unknowns.
These equations are supplemented
with initial and terminal boundary conditions.  We primariy assume that the initial value of $m$ and the final 
value of $u$ are prescribed functions,
\begin{equation}\label{planningBC}
m(0,x)=m_{0}(x),\qquad u(T,x)=u_{T}(x).
\end{equation}
This is a special case of the more general payoff problem, which uses the following:
\begin{equation}\label{payoffBC}
m(0,x)=m_{0}(x),\qquad u(T,x)=G(x,m(T,x)),
\end{equation}
where $G$ is known as the payoff function.  While we will focus on \eqref{planningBC} for simplicity, 
we can treat the more general problem \eqref{payoffBC} as well,
and will discuss this in some detail in Section \ref{payoffExtension} below.

The author has previously adapted ideas from the work
of Duchon and Robert on vortex sheets in incompressible flow \cite{duchonRobert}
to prove existence of strong solutions for mean field games.
Duchon and Robert developed a Duhamel formula for the
vortex sheet which integrated both forward and backward in time, and found a contraction 
in function spaces based on the Wiener algebra, proving the existence of small spatially analytic
solutions.  The ideas of Duchon and Robert have been extended to finite time horizons and the spatially
periodic setting by Milgrom and the author \cite{milgromAmbrose}.
All of these features are thus also characteristics
of the author's work \cite{ambroseMFG} on mean field games.  This was extended somewhat
in \cite{ambroseMFG2}, in which non-separable Hamiltonians were treated, and a result
in the case of weak coupling, making use of the implicit function theorem, was also given.

Other authors have proved existence theorems for mean field games, focusing much attention on the 
case of separable mean field games.  The assumption of separability is that the Hamiltonian, $\mathcal{H},$
separates additively as $\mathcal{H}(t,x,m,Du)=H(t,x,Du)+F(t,x,m).$  This $H$ is then also known as the Hamiltonian,
and the function $F$ is known as the coupling (for if one were to take $F=0,$ then the system decouples).  
The separability assumption, as well as further structural assumptions such as convexity of $H$ and monotonicity of 
$F,$ allow certain mathematical methods to be brought to bear on the problems (i.e., use of convex optimization and 
montonoicity methods, as well as techniques of optimal transportation).
Porretta has proved in the separable case, using such techniques, the existence of weak solutions 
\cite{porrettaCRAS}, \cite{porretta1}, \cite{porretta2}.
Results in this vein for strong solutions
are by Gomes, Pimentel, and Sanchez-Morgado in the case of superquadratic and subquadratic
Hamiltonians \cite{gomesSuper}, \cite{gomesSub}, and by 
Gomes and Pimentel for the case of logarithmic coupling \cite{gomesLog}.

Although the separable case does have a number of sophisticated mathematical techniques available for existence
theory, unfortunately, problems actually arising from game theory and economics do not tend to have this separable
structure \cite{mollNotes}.  Therefore a study of existence theory not relying on this structure is essential. 
A particular example of a nonseparable mean field game arising in practice is a model of household
wealth \cite{pdemacro}, \cite{achdouMacro2}.  The author has proved the first existence theorem for the time-dependent version
of this model for household wealth \cite{ambroseHousehold}.  The techniques of the present paper are related to those used in
\cite{ambroseHousehold}, but we treat a general class of Hamiltonians here rather than the one arising from the specific application.
 
As mentioned above, the author has previously made one study of mean field games with non-separable Hamiltonians 
\cite{ambroseMFG2}, and this work contained two
different results.   The first of these placed a smallness condition on the data, and the other considered a small parameter in front of
the Hamiltonian.  There are a few other existence results in the literature for mean field games with non-separable Hamiltonians.
Cirant, Gianni, and Mannucci have
proven an existence theorem for non-separable mean field games in Sobolev spaces, under a smallness condition on 
the time horizon \cite{cirantNew}; they also place a restriction on the initial distribution of agents, requiring that this be bounded away from
zero (we have no such restriction).  In \cite{cirantNew} the boundary conditions \eqref{payoffBC} are used, with the assumption that
the payoff function $G$ is smoothing.  We will focus primarily on the boundary conditions \eqref{planningBC} in the present work;
since in this case we consider $u$ to be more regular than $m,$ this falls into their paradigm of having $G$ be smoothing.  We do, however,
go beyond this, proving a theorem in Section \ref{discussionSection} in which $G$ is not assumed to be smoothing.
The only other proofs in the literature of existence for non-separable mean field games of which the author is aware
are for a particular form of Hamiltonian related to modeling problems with congestion 
\cite{gomesCongestion}; in this work, Gomes and Voskanyan made a smallness assumption on $T,$ the length
of the time horizon, and still do make structural assumptions such as monotonicity on part of the Hamiltonian.  We also 
address problems with congestion in Section \ref{discussionSection}.

In the present work, we introduce a unified smallness condition which considers at once
the size of the time horizon, the coupling parameter introduced by the author in \cite{ambroseMFG2}, and in some cases,
the size of the data.  In addition to unifying the smallness constraints, a benefit of the present work as compared to
\cite{ambroseMFG2} is the setting of more customary Sobolev spaces as opposed to the spaces based on the Wiener
algebra used previously.
We also prove a uniqueness theorem, and as in the case of our existence theorem, a smallness condition must be satisfied.
This smallness condition again considers at once a parameter which we describe as measuring the coupling in the system
(unrelated to the concept of coupling in the separable case), the size of the time horizon, and in some cases, the size
of the initial data.  Such a smallness constraint is perhaps not just a feature of our proof, but may be more fundamental:
Bardi and Fischer have recently given an example in mean field games of non-unique solutions, in the case of large time
horizon \cite{bardiFischer}.  Cirant and Tonon have also given an example of non-uniqueness \cite{cirantTonon}.
While their settings may not be exactly the same as ours (we study the problem on the torus, 
and their construction uses the domain as the real line in a fundamental way), it is strongly suggestive that constraints such
as those we impose are not in general avoidable.  
Bardi and Cirant have a related uniqueness theorem, for separable
mean field games with some smallness constraints \cite{bardiCirant}.

The plan of the paper is as follows.  Immediately below, in Section \ref{preliminaries}, we give some elementary 
definitions and results on Sobolev spaces.  In Section \ref{formulation}, we reformulate the problem slightly and introduce
an approximating sequence for solutions.  In Section \ref{uniformEstimates}, 
we prove our first main theorem (stated at the end of the
section as Theorem \ref{existenceTheorem}), 
that under our smallness assumption, the approximating sequence converges to a
solution of the mean field games system.  We next treat uniqueness of solutions in Section \ref{uniquenessSection}, 
stating our second main
theorem, Theorem \ref{uniquenessTheorem}, at the end of the section.  
We close with some discussion in Section \ref{discussionSection}.  Included in this discussion section is a proof 
of existence of solutions for mean field games with nonsmoothing payoff functions, and also as a corollary of our main theorem,
a proof of existence of solutions for mean field games with congestion

\subsection{Function spaces and preliminaries}\label{preliminaries}

We will make repeated use of Young's Inequality:  for any $a\geq0,$ for any $b\geq 0,$ and for any $\sigma>0,$
we have
\begin{equation}\label{young}
ab\leq \frac{a^{2}}{2\sigma}+\frac{\sigma b^{2}}{2}.
\end{equation}
To be very definite, we say that we let $\mathbb{N}=\{0,1,2,\ldots\}$ be the natural numbers,
including zero.

We now define our function spaces and norms.  The $d$-dimensional torus is the set $[0,2\pi]^{d}$ with
periodic boundary conditions.  The Fourier transform of a function $f$ may be denoted either as $\mathcal{F}f(k)$
or $\hat{f}(k),$ with Fourier variable $k\in\mathbb{Z}^{d}.$
Of course, the Sobolev space $H^{0}(\mathbb{T}^{d})$ is equal to $L^{2}(\mathbb{T}^{d}),$ with the same norm.
 We need multi-index notation for derivatives with respect to the $x$ variables.  
We will use $\alpha\in\mathbb{N}^{d}$ for this purpose.  Thus, given such an $\alpha,$ we will have 
$\partial^{\alpha}=\partial_{x_{1}}^{\alpha_{1}}\cdots\partial_{x_{d}}^{\alpha_{d}}.$  The order of
$\alpha$ is $|\alpha|=\displaystyle\sum_{\ell=1}^{d}\alpha_{\ell}.$  For $s\in\mathbb{N},$ with $s>0,$
the Sobolev space of order $s$ is the set of functions
\begin{equation}\nonumber
H^{s}(\mathbb{T}^{d})=\left\{f\in L^{2}(\mathbb{T}^{d}): \|f\|_{s}<\infty\right\},
\end{equation}
where the norm is defined by
\begin{equation}\nonumber
\|f\|_{s}^{2}=\sum_{0\leq|\alpha|\leq s}\|\partial^{\alpha}f\|_{0}^{2}.
\end{equation}
Here, as is usual, the notation $\|\cdot\|_{0}=\|\cdot\|_{L^{2}}$ is used.
This definition is equivalent to any other usual definition of Sobolev spaces with index in the natural numbers.
We need an elementary interpolation lemma, which we now state.

\begin{lemma}\label{elementaryInterpolation}
Let $m$ and $s$ be real numbers such that $0<m<s.$  There exists $c>0$ such that for all $f\in H^{s},$
\begin{equation}\nonumber
\|f\|_{m}\leq c\|f\|_{s}^{m/s}\|f\|_{0}^{1-m/s}.
\end{equation}
\end{lemma}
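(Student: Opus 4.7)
The plan is to pass to the Fourier representation of the Sobolev norms on the torus and reduce the inequality to a single application of H\"older's inequality against counting measure. Owing to the identity $\widehat{\partial^{\alpha}f}(k)=(ik)^{\alpha}\hat{f}(k)$ and the elementary comparison of $\sum_{|\alpha|\leq s}|k^{\alpha}|^{2}$ with $(1+|k|^{2})^{s}$ for fixed $s\in\mathbb{N}$, the norm $\|\cdot\|_{s}$ defined in the excerpt is equivalent to the multiplier norm $\bigl(\sum_{k\in\mathbb{Z}^{d}}(1+|k|^{2})^{s}|\hat{f}(k)|^{2}\bigr)^{1/2}$. For real (possibly non-integer) $s$, I would take this multiplier expression as the working definition, consistent with the remark in the excerpt that the derivative-based definition agrees with any other standard definition of Sobolev spaces.

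With this characterization in hand, I would factor the Fourier symbol as
\begin{equation*}
(1+|k|^{2})^{m}|\hat{f}(k)|^{2} = \bigl[(1+|k|^{2})^{s}|\hat{f}(k)|^{2}\bigr]^{m/s}\bigl[|\hat{f}(k)|^{2}\bigr]^{1-m/s},
\end{equation*}
and apply H\"older's inequality in the sum over $k\in\mathbb{Z}^{d}$ with the conjugate exponents $p=s/m$ and $q=s/(s-m)$, which are legitimate since $0<m<s$. This produces
\begin{equation*}
\sum_{k}(1+|k|^{2})^{m}|\hat{f}(k)|^{2}\leq \Bigl(\sum_{k}(1+|k|^{2})^{s}|\hat{f}(k)|^{2}\Bigr)^{m/s}\Bigl(\sum_{k}|\hat{f}(k)|^{2}\Bigr)^{1-m/s}.
\end{equation*}
Taking square roots, invoking Plancherel to rewrite the last factor as $\|f\|_{0}^{2(1-m/s)}$, and absorbing the norm-equivalence constants into a single $c$ yields the stated bound.

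There is essentially no technical obstacle here: the content reduces to the H\"older step, and the only items worth checking are that the exponents are truly conjugate (\emph{i.e.}, $m/s+(s-m)/s=1$) and that $f\in H^{s}$ guarantees the right-hand side is finite. The constant $c$ depends only on $d$, $s$, and $m$ through the norm equivalence, and not on $f$.
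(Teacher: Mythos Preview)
Your argument is correct: the Fourier-multiplier characterization of $H^{s}(\mathbb{T}^{d})$ followed by H\"older's inequality with exponents $s/m$ and $s/(s-m)$ is the standard route, and the factorization and exponent check are both right. The paper itself does not give a proof of this lemma at all---it simply states the result and cites an external reference---so there is nothing to compare against; your proof is a complete and appropriate justification.
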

We do not include a proof of Lemma \ref{elementaryInterpolation}; the proof can be found many places, one of which
is \cite{ambroseThesis}.
We also need an elementary lemma about products in Sobolev spaces; this is part of Lemma 3.4 of \cite{majdaBertozzi},
and the proof can be found there.

\begin{lemma}\label{leadingTermsSobolev} 
Let $m\in\mathbb{N}.$  There exists $c>0$ such that for all $f\in L^{\infty}\cap H^{m}$ and for all $g\in L^{\infty}\cap H^{m},$
\begin{equation}\nonumber
\sum_{0\leq|\alpha|\leq m}\|\partial^{\alpha}(fg)-f\partial^{\alpha}g\|_{L^{2}}
\leq c\left(|Df|_{\infty}\|D^{m-1}g\|_{L^{2}}+\|D^{m}f\|_{L^{2}}|g|_{\infty}\right).
\end{equation}
\end{lemma}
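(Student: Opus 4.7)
The plan is to reduce the commutator to a finite sum of mixed-derivative products via the Leibniz rule, then bound each product by H\"older's inequality and Gagliardo--Nirenberg interpolation. For each multi-index $\alpha$ with $|\alpha|\geq 1,$
\[
\partial^{\alpha}(fg)-f\partial^{\alpha}g=\sum_{\substack{\beta\leq \alpha\\ 1\leq|\beta|\leq|\alpha|}}\binom{\alpha}{\beta}\,\partial^{\beta}f\,\partial^{\alpha-\beta}g,
\]
and the $|\alpha|=0$ term vanishes identically. Since the full sum over $0\leq|\alpha|\leq m$ contains only finitely many such terms, it suffices to bound a typical summand $\partial^{\beta}f\,\partial^{\alpha-\beta}g$ with $j=|\beta|\geq 1$ and $\ell=|\alpha-\beta|\leq m-j$ (assuming $m\geq 2$; the $m=1$ case is immediate from the Leibniz rule).

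H\"older's inequality gives $\|\partial^{\beta}f\,\partial^{\alpha-\beta}g\|_{0}\leq \|\partial^{\beta}f\|_{L^{p}}\|\partial^{\alpha-\beta}g\|_{L^{q}}$ for any $(p,q)$ with $1/p+1/q=1/2.$ I would choose $p=2(m-1)/(j-1)$ and $q=2(m-1)/(m-j),$ with the endpoints $j=1$ and $j=m$ interpreted as $L^{\infty}.$ The Gagliardo--Nirenberg inequality, applied to $\partial^{\beta}f$ interpolated between $Df\in L^{\infty}$ and $D^{m}f\in L^{2},$ and to $\partial^{\alpha-\beta}g$ interpolated between $g\in L^{\infty}$ and $D^{m-1}g\in L^{2},$ yields, with $\theta=(j-1)/(m-1),$
\[
\|\partial^{\beta}f\|_{L^{p}}\leq c\,|Df|_{\infty}^{1-\theta}\|D^{m}f\|_{0}^{\theta},\qquad \|\partial^{\alpha-\beta}g\|_{L^{q}}\leq c\,|g|_{\infty}^{\theta}\|D^{m-1}g\|_{0}^{1-\theta}.
\]
When $\ell<m-j$ the extra room is absorbed into the $|g|_{\infty}$ factor via interpolation with a smaller exponent. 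The scaling identity $1/p+1/q=(j-1)/(2(m-1))+(m-j)/(2(m-1))=1/2$ is verified directly.

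Multiplying and regrouping puts the bound on $\|\partial^{\beta}f\,\partial^{\alpha-\beta}g\|_{0}$ in the form $A^{1-\theta}B^{\theta}$ with $A=|Df|_{\infty}\|D^{m-1}g\|_{0}$ and $B=\|D^{m}f\|_{0}|g|_{\infty}.$ Young's inequality applied with conjugate exponents $1/(1-\theta)$ and $1/\theta$ then converts this into a multiple of $A+B,$ which is precisely the right-hand side of the lemma. Summing the finitely many terms completes the proof.

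The main obstacle is the Gagliardo--Nirenberg bookkeeping: one must verify the scaling and derivative-order hypotheses for the chosen $(p,q,\theta),$ and handle the endpoint cases $j=1$ (where no interpolation on $f$ is needed and $|Df|_{\infty}$ appears directly) and $j=m$ (where no interpolation on $g$ is needed and $|g|_{\infty}$ appears directly). Once the exponents are in place, the remainder of the argument is purely algebraic and produces no further complications.
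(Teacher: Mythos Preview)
Your argument is correct and is essentially the standard proof of this calculus inequality: Leibniz expansion, H\"older with complementary exponents, Gagliardo--Nirenberg interpolation of each factor, and Young's inequality to convert $A^{1-\theta}B^{\theta}$ into $A+B$. The paper does not supply its own proof; it simply cites Lemma~3.4 of Majda--Bertozzi, and the proof there follows exactly the route you outline. So there is nothing substantively different to compare.

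One small remark on presentation: your treatment of the case $\ell=|\alpha-\beta|<m-j$ is a bit compressed. The cleanest way to handle it on $\mathbb{T}^{d}$ is to note that once Gagliardo--Nirenberg gives $\|\partial^{\alpha-\beta}g\|_{L^{q'}}$ with the natural exponent $q'=2(m-1)/\ell>q$, the compactness of the torus yields $L^{q'}\hookrightarrow L^{q}$, so the H\"older pairing with $p=2(m-1)/(j-1)$ still closes. Alternatively, one may simply observe that the worst case is $|\alpha|=m$, where $\ell=m-j$ and your displayed exponents match exactly; the lower-order terms are then dominated by this. Either justification is routine, but spelling one of them out would remove the only vague step in your sketch.
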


\section{Formulation and The Approximating Sequence}\label{formulation}

As we have said, we will track three effects in our existence theorem: the size of the time horizon 
(i.e., the magnitude of the value $T$),
the size of the initial data, and the strength of the coupling between the two equations of the mean field games
system.  We now explain what we mean by this third item.  We introduce a slight modification of the system
\eqref{uEquation}, \eqref{mEquation}:
\begin{equation}\nonumber
u_{t}+\Delta u+\varepsilon\mathcal{H}(t,x,m,Du)=0,
\end{equation}
\begin{equation}\nonumber
m_{t}-\Delta m+\varepsilon\mathrm{div}(m\mathcal{H}_{p}(t,x,m,Du))=0,
\end{equation}
for some $\varepsilon\in\mathbb{R}.$
Obviously if $\varepsilon=1,$ this is exactly the system \eqref{uEquation}, \eqref{mEquation}.
If instead $\varepsilon=0,$ then the system decouples -- one could solve the linear heat equation for $m$
and then the other linear heat equation for $u.$  We may call the case of small values of $\varepsilon$ the
case of weak coupling of the system, and one of the theorems of \cite{ambroseMFG2} was in the case
of weak coupling.  We will perform our existence theory for the system with $\varepsilon$ included as 
a parameter.

We let $\bar{m}=\displaystyle\frac{1}{\mathrm{vol}(\mathbb{T}^{d})},$ which is the average value of
$m$ (since $m$ is a probability distribution).  It is convenient to introduce $\mu=m-\bar{m}.$
We also subtract the mean from $u,$ since inspection of the right-hand sides of the evolution equations
indicates that the mean of $u$ does not influence the evolution.  We introduce a projection operator, $P,$
which removes the mean of a periodic function (so, we could have said before that $\mu=Pm$), and
denote $w=Pu.$  Note that then $Dw=Du.$

We now introduce the system of equations satisfied by $(w,\mu),$
first giving notation for the Hamiltonian in terms of $(w,\mu):$
\begin{equation}\nonumber
\Theta(t,x,\mu,Dw)=\mathcal{H}(t,x,m,Du).
\end{equation}
We then have the $(w,\mu)$ system:
\begin{equation}\label{uMu1}
w_{t}+\Delta w+\varepsilon P\Theta(t,x,\mu,Dw)=0,
\end{equation}
\begin{equation}\label{uMu2}
\mu_{t}-\Delta\mu+\varepsilon\mathrm{div}(\mu\Theta_{p}(t,x,\mu,Dw))
+\varepsilon\bar{m}\mathrm{div}(\Theta_{p}(t,x,\mu,Dw))=0.
\end{equation}
Of course, we have initial data for $\mu:$
\begin{equation}\nonumber
\mu(0,x)=\mu_{0}(x):=m(0,x)-\bar{m}.
\end{equation}
We will discuss the data for $w$ soon below.

\begin{remark} We will be proving an existence theorem with a smallness condition.
The reason for subtracting $\bar{m}$ from $m$ to form $\mu$ and for replacing $u$ with $w$ 
is to clarify this smallness condition.
Taking $m$ arbitrarily small is not compatible with the fact that $m$ should be a probability
measure.  Furthermore, since the mean of $u$ is not relevant for the right-hand sides of the equation, 
requiring the mean of $u$ to be small would be artificial.
Instead, the smallness condition will include information about the
initial size of $\mu,$ and about the size of the data for $w;$ as far as $m$ goes, then, 
we will be measuring how far $m$ is from a uniform distribution.  
\end{remark}

It is convenient to introduce a regularization operator, which will be useful as we construct solutions.
Let $\delta>0$ be given.  We let $\mathbb{P}_{\delta}$ be the Fourier multiplier operator which projects onto 
Fourier modes with wavenumber
at most $1/\delta:$
\begin{equation}\nonumber
\mathcal{F}\left(\mathbb{P}_{\delta}f\right)(k)=\left\{\begin{array}{ll}\mathcal{F}f(k),&|k|\leq 1/\delta,\\
0,&|k|>1/\delta.\end{array}\right.
\end{equation}
We may use the convention $\mathbb{P}_{0}=I,$ where this signifies the identity operator.

We then set up an iterative approximation scheme, which will depend slightly on the choice of
boundary conditions.  In either case, we initialize in the same way, and we solve for $\mu$ in the same way.
Define $\mu^{0}=0$ and $w^{0}=0.$
Given $(w^{n},\mu^{n}),$ we define $\mu^{n+1}$ to be the unique solution of
the initial value problem for the following forced, linear heat equation:
\begin{equation}\label{muN}
\mu^{n+1}_{t}-\Delta\mu^{n+1}+\varepsilon\mathrm{div}(\mu^{n}\Theta_{p}(t,x,\mu^{n},Dw^{n}))
+\varepsilon\bar{m}\mathrm{div}(\Theta_{p}(t,x,\mu^{n},Dw^{n}))=0,
\end{equation}
\begin{equation}\label{muN+1BC}
\mu^{n+1}(0,x)=\mathbb{P}_{\delta}\mu_{0}(x).
\end{equation}

We define $w^{n+1}$ to be the unique solution of the following forced, linear heat equation:
\begin{equation}\label{uN}
w^{n+1}_{t}+\Delta w^{n+1}+\varepsilon P\Theta(t,x,\mu^{n},Dw^{n})=0,
\end{equation}
\begin{equation}\label{uN+1BC}
w^{n+1}(T,x)=\mathbb{P}_{\delta}w_{T}(x),
\end{equation}
where $w_{T}=Pu_{T}.$
We may be completely explicit as to what these solutions $(w^{n+1},\mu^{n+1})$ are, by using Duhamel's
formula; for $w$ we have
\begin{equation}\label{uN+1ByDuhamel}
w^{n+1}(t,\cdot)=e^{\Delta(T-t)}\mathbb{P}_{\delta}w_{T}-\varepsilon P\int_{t}^{T}e^{\Delta(s-t)}
\Theta(s,\cdot,\mu^{n}(s,\cdot),Dw^{n}(s,\cdot))\ ds.
\end{equation}
For $\mu,$ we instead integrate forward in time, finding
\begin{multline}\label{muN+1ByDuhamel}
\mu^{n+1}(t,\cdot)=e^{\Delta t}\mathbb{P}_{\delta}\mu_{0}-\varepsilon\int_{0}^{t}e^{\Delta(t-s)}
\mathrm{div}\left(\mu^{n}(s,\cdot)\Theta_{p}(s,\cdot,\mu^{n}(s,\cdot),Dw^{n}(s,\cdot))\right)\ ds
\\
-\varepsilon\bar{m}\int_{0}^{t}e^{\Delta(t-s)}\mathrm{div}\left(
\Theta_{p}(s,\cdot,\mu^{n}(s,\cdot),Dw^{n}(s,\cdot))\right)\ ds.
\end{multline}

\begin{remark}
Because of the presence of the projection $\mathbb{P}_{\delta},$ the initial and terminal data for $\mu^{n}$ and $w^{n},$
respectively, for all $n,$ is infinitely smooth.  Furthermore, $\mu^{n}$ and $w^{n}$ satisfy linear heat equations.  It is trivial
to show by induction, then, that for all $n,$ the solutions given by \eqref{uN+1ByDuhamel} and 
\eqref{muN+1ByDuhamel} are infinitely smooth at each
time $t\in[0,T],$ at least if the Hamiltonian is $C^{\infty}$ (the regularity of the iterates is otherwise only limited by
the regularity of $\mathcal{H}$).  
This fact helps to justify the estimates to be carried out in Section \ref{uniformEstimates} below.
\end{remark}

\section{Uniform Estimates and Existence of Solutions}\label{uniformEstimates}

Having defined a sequence of approximate solutions $(w^{n},\mu^{n})$ in Section \ref{formulation},
we will now work towards passing to the limit as $n$ goes to infinity.  In the present section, we will state
assumptions on the Hamiltonian which will allow us to make estimates uniform in $n.$

We introduce now some further multi-index notation.  We will
use this for denoting derivatives of $\Theta.$ 
Consider $\Theta=\Theta(t,x_{1},\ldots,x_{d},q,p_{1},\ldots,p_{d}).$ 
A multi-index $\beta$ is an element of $\mathbb{N}^{2d+1};$ the first $d$ positions 
correspond to the spatial variables $x_{1}, x_{2}, \ldots x_{d},$ the $(d+1)^{\mathrm{st}}$ position corresponds to the 
variable $q$ (which is a placeholder for $\mu$), and the final $d$ positions correspond to the $p$ 
variables.  Derivatives with respect to such a multi-index $\beta$ are denoted $\partial^{\beta},$
and the order of $\beta$ is $|\beta|=\displaystyle\sum_{\ell=1}^{2d+1}\beta_{\ell},$
as is usual.  We make the following assumption on $\mathcal{H}:$\\

\noindent{\bf(H1)} The function $\mathcal{H}$ is such that there exists a non-decreasing function
$\tilde{F}:[0,\infty)\rightarrow[0,\infty)$ such that for all $\beta\in\mathbb{N}^{2d+1}$ with $|\beta|\leq s+2,$
\begin{equation}\nonumber
\left|\partial^{\beta}\Theta(\cdot,\cdot,\nu,Dy)\right|_{\infty}\leq 
\tilde{F}\left(|\nu|_{\infty}+|Dy|_{\infty}\right).
\end{equation}

We use Sobolev embedding to replace $\tilde{F}$ with the closely related $F,$ which is also a non-decreasing function and
which instead satisfies
\begin{equation}\label{thetaInfinityBound}
\left|\partial^{\beta}\Theta(\cdot,\cdot,\nu,Dy)\right|_{\infty}\leq 
F\left(\|\nu\|_{\left\lceil\frac{d+1}{2}\right\rceil}^{2}+\|Dy\|_{\left\lceil\frac{d+1}{2}\right\rceil}^{2}\right),
\end{equation}
for all $\beta$ as above.
Based on this assumption, we may conclude a useful lemma.

\begin{lemma}\label{boundsOnTheta}
Assume {\bf(H1)} is satisfied.  
There exist constants $\bar{c}>0$ and $\bar{C}>0$ such that for multi-indices $\beta$ (as defined at the beginning of this
section) and for
multi-indices $\alpha$ (as defined in Section \ref{preliminaries}), 
\begin{multline}\nonumber
\sum_{|\beta|\leq 2}\left(\sum_{|\alpha|\leq s-1}\left\|\partial^{\alpha}\left((\partial^{\beta}\Theta)(t,\cdot,\mu,Dw)\right)\right\|_{L^{2}(\mathbb{T}^{d})}\right)
\\
\leq\bar{c}F\left(\|\mu\|_{\left\lceil\frac{d+1}{2}\right\rceil}^{2}+\|Dw\|_{\left\lceil\frac{d+1}{2}\right\rceil}^{2}\right)
\left(1+\|\mu\|_{s-1}+\|w\|_{s}\right)^{s-1},
\end{multline}
and furthermore,
\begin{multline}\label{usefulBoundOnTheta}
\sum_{|\beta|\leq 2}\left\|(\partial^{\beta}\Theta)(t,\cdot,\mu,Du)\right\|_{s-1}
\\
\leq\bar{C}F\left(\|\mu\|_{\left\lceil\frac{d+1}{2}\right\rceil}^{2}+\|Dw\|_{\left\lceil\frac{d+1}{2}\right\rceil}^{2}\right)
\left(1+\|\mu\|_{s-1}+\|w\|_{s}\right)^{s-1}.
\end{multline}
\end{lemma}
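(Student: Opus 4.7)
The plan is to apply the multivariate chain rule (Faà di Bruno) to $\partial^{\alpha}[(\partial^{\beta}\Theta)(t,\cdot,\mu,Dw)]$, and then to estimate each resulting term in $L^{2}$ by separating the factor consisting of a higher-order derivative of $\Theta$, which is controlled pointwise by (H1), from the product of derivatives of $\mu$ and $Dw$ produced by the chain rule, which is controlled by a Moser-type Sobolev product estimate. The second inequality in the statement is an immediate consequence of the first by the definition of $\|\cdot\|_{s-1}$ as a sum of $L^{2}$ norms of derivatives of order at most $s-1$.

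Fix multi-indices $\alpha$ with $|\alpha|\leq s-1$ and $\beta$ with $|\beta|\leq 2$. Iterating the chain rule expresses $\partial^{\alpha}[(\partial^{\beta}\Theta)(t,\cdot,\mu,Dw)]$ as a finite sum (with combinatorial coefficients) of terms of the shape
\begin{equation*}
(\partial^{\beta+\gamma}\Theta)(t,\cdot,\mu,Dw)\,\prod_{i=1}^{N}\partial^{\gamma_{i}}\xi_{j_{i}},
\end{equation*}
where $\gamma\in\mathbb{N}^{2d+1}$ records the additional differentiations landing on $\Theta$ itself, each $\xi_{j_{i}}$ is either $\mu$ or a component of $Dw$, the $\gamma_{i}\in\mathbb{N}^{d}$ are nonzero with $\sum_{i}|\gamma_{i}|=|\alpha|$, and $|\gamma|\leq|\alpha|$. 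Thus $N\leq|\alpha|\leq s-1$ and $|\beta+\gamma|\leq s+1\leq s+2$, so that hypothesis (H1) together with \eqref{thetaInfinityBound} yields the uniform pointwise bound
\begin{equation*}
\left|(\partial^{\beta+\gamma}\Theta)(t,\cdot,\mu,Dw)\right|\leq F\!\left(\|\mu\|_{\lceil(d+1)/2\rceil}^{2}+\|Dw\|_{\lceil(d+1)/2\rceil}^{2}\right).
\end{equation*}

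It thus remains to estimate $\prod_{i}\partial^{\gamma_{i}}\xi_{j_{i}}$ in $L^{2}$. Reordering so that $|\gamma_{1}|$ is maximal, place $\partial^{\gamma_{1}}\xi_{j_{1}}$ in $L^{2}$, bounded by $\|\mu\|_{s-1}$ or $\|w\|_{s}$, and control each of the remaining $N-1$ factors in $L^{\infty}$ by combining Sobolev embedding with the interpolation of Lemma \ref{elementaryInterpolation}, so that each is bounded by $C(1+\|\mu\|_{s-1}+\|w\|_{s})$. Since $N-1\leq s-2$, the product of the $L^{\infty}$ bounds contributes $(1+\|\mu\|_{s-1}+\|w\|_{s})^{s-2}$, and multiplying by the $L^{2}$ factor yields the claimed polynomial $(1+\|\mu\|_{s-1}+\|w\|_{s})^{s-1}$. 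Summing over Faà di Bruno partitions and then over $|\alpha|\leq s-1$ and $|\beta|\leq 2$ gives the first inequality. The principal difficulty is pure bookkeeping: verifying that the exponent $s-1$ is achieved uniformly across all partitions, since when $N$ is strictly smaller than $s-1$ some block carries several derivatives, and one must confirm that the interpolation performed in the $L^{\infty}$ step contributes a total homogeneity in $\|\mu\|_{s-1}+\|w\|_{s}$ no larger than $s-1$. A clean shortcut would be to invoke a tame Moser composition estimate directly, treating $\Theta(t,x,\cdot,\cdot)$ as a smooth function of its last $d+1$ arguments.
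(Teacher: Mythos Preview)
The paper does not give a proof of this lemma; it is stated as a direct consequence of {\bf(H1)} and left to the reader. Your approach --- expand by the chain rule, bound the outer factor $(\partial^{\beta+\gamma}\Theta)(t,\cdot,\mu,Dw)$ pointwise by $F$ via {\bf(H1)}, and control the remaining product of derivatives of $\mu$ and $Dw$ by a Moser-type estimate --- is exactly the standard argument one would supply, and is presumably what the author has in mind.

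One point deserves more care than you give it. The step ``place $\partial^{\gamma_{1}}\xi_{j_{1}}$ in $L^{2}$ and each remaining factor in $L^{\infty}$, each bounded by $C(1+\|\mu\|_{s-1}+\|w\|_{s})$'' is not literally correct: with only $s\geq\lceil(d+5)/2\rceil$, a secondary factor $\partial^{\gamma_{i}}\xi_{j_{i}}$ with $|\gamma_{i}|\geq 2$ need not lie in $L^{\infty}$ under control of $\|\mu\|_{s-1}$ or $\|w\|_{s}$ alone (Sobolev embedding would require $|\gamma_{i}|\leq s-1-\lceil(d+1)/2\rceil$, which can be as small as $1$). The correct bookkeeping uses Gagliardo--Nirenberg interpolation to place each factor in an intermediate $L^{p_{i}}$ and then H\"older, so that the exponents of the top-order norm add up to at most $1$ per block and the total homogeneity is at most $s-1$. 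You flag this yourself as the ``principal difficulty,'' and your suggested shortcut --- quoting a tame Moser composition estimate for $H^{s-1}$ directly --- is the cleanest way to close it. With that adjustment the argument is complete.
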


It is helpful to expand the divergence which appears in the evolution equation for $\mu^{n+1}.$
We have the following formula:
\begin{multline}\label{expandedDivergence}
\mathrm{div}\left(\Theta_{p}(t,x,\mu^{n},Dw^{n})\right)=\sum_{i=1}^{d}
\Theta_{x_{i}p_{i}}(t,x,\mu^{n},Dw^{n})
\\
+\sum_{i=1}^{d}\left[\left(\Theta_{qp_{i}}(t,x,\mu^{n},Dw^{n})\right)(\partial_{x_{i}}\mu^{n})
\right]
+\sum_{i=1}^{d}\sum_{j=1}^{d}
\left[
(\Theta_{p_{i}p_{j}}(t,x,\mu^{n},Dw^{n}))(\partial^{2}_{x_{i}x_{j}}w^{n})
\right].
\end{multline}

\begin{remark}There are clearly three kinds of terms on the right-hand side of \eqref{expandedDivergence}.
We will be making energy estimates for $(w^{m},\mu^{m})\in H^{s}\times H^{s-1},$ for all $m.$  
The first kind of term involves no derivatives
of $\mu^{n}$ and first derivatives of $w^{n},$ and may be treated routinely in the estimates.  The second kind of terms
involve first derivatives on each of $\mu^{n}$ and $w^{n}.$  The first derivatives on $w^{n}$ cause no problems because of the
choice of function space.  The first derivatives on $\mu^{n}$ indicate that these are transport terms, which could typically
be treated in the energy estimate by integration by parts.  However, because of our iterative scheme, the necessary
structure for integration by parts is not present.  We will instead bound these terms using the available parabolic smoothing.
The third kind of term on the right-hand side of \eqref{expandedDivergence} involves no derivatives on $\mu^{n}$ and
second derivatives on $w^{n};$ these terms will also be bounded by taking advantage of parabolic smoothing.
\end{remark}

Similarly to the above, we apply $\partial_{x_{j}}$ to \eqref{uN}:
\begin{multline}\label{uN+1xJEvolution}
\partial_{x_{j}}w^{n+1}_{t}=-\Delta\partial_{x_{j}}w^{n+1}-\varepsilon \Theta_{x_{j}}(\cdot,\cdot,\mu^{n},Dw^{n})\\
-\varepsilon \left(\Theta_{q}(\cdot,\cdot,\mu^{n},Dw^{n})\right)\mu^{n}_{x_{j}}
-\varepsilon \sum_{i=1}^{d}\left[
\left(\Theta_{p_{i}}(\cdot,\cdot,\mu^{n},Dw^{n})\right)
\partial^{2}_{x_{i}x_{j}}w^{n}
\right].
\end{multline}
Notice that we have dropped the operator $P,$ since $\partial_{x_{j}}P=\partial_{x_{j}}.$

We now are ready to begin proving our existence theorem.  We will be making energy estimates for the unknowns,
and to get the estimates to yield control over norms of the unknowns, we will, as we have said, make a smallness 
assumption.  Unfortunately it would be quite complicated to state the smallness assumption before the estimates have
been carried out since it involves a constant which arises in the course of making the energy estimates.  Rather than
attempt to state this condition now, we will begin carrying out the energy estimates and make the assumption {\bf(H2)}
at the appropriate time below.  We now make an 
statement of the theorem to be proved, and will restate it again afterwards to be more specific about the smallness 
condition.
\begin{theorem}
Let $T>0$ and $\varepsilon>0$ be given.  
Let $s\geq \left\lceil\frac{d+5}{2}\right\rceil$ and let 
$\mu_{0}\in H^{s-1}(\mathbb{T}^{d})$ be such that $\bar{m}+\mu_{0}$ is a 
probability measure.  Let $u_{T}\in H^{s}(\mathbb{T}^{d})$ be given.  Assume that the condition
{\bf(H1)} is satisfied.  If the product $\varepsilon T F(8(\|\mu_{0}\|^{2}_{s-1}+\|Dw_{T}\|^{2}_{s-1}))$ is sufficiently small, 
then there exists $\mu\in L^{\infty}([0,T];H^{s-1})\cap L^{2}([0,T];H^{s})$ and
there exists $u\in L^{\infty}([0,T];H^{s})\cap L^{2}([0,T];H^{s+1})$ such that $\bar{m}+\mu$ is a probability measure for all
$t\in[0,T],$ and such that $(u,\bar{m}+\mu)$ is a classical solution of 
\eqref{uEquation}, \eqref{mEquation}, \eqref{planningBC}.
Furthermore, for all $s'\in[0,s),$ we have $\mu\in C([0,T];H^{s'-1})$ and $u\in C([0,T];H^{s'}).$
\end{theorem}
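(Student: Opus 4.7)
The plan is, first, to prove uniform-in-$n$ bounds for the iterates $(w^n,\mu^n)$ in the strong topology $L^\infty([0,T];H^s)\cap L^2([0,T];H^{s+1})$ for $w^n$ and $L^\infty([0,T];H^{s-1})\cap L^2([0,T];H^s)$ for $\mu^n$; second, to show the sequence is Cauchy in a slightly weaker norm; and third, to pass to the limit in the Duhamel representations \eqref{uN+1ByDuhamel} and \eqref{muN+1ByDuhamel} to obtain a solution of \eqref{uMu1}--\eqref{uMu2}. I will track the combined energy
\[
E_n(T) := \sup_{t\in[0,T]}\Bigl(\|w^n(t)\|_s^2 + \|\mu^n(t)\|_{s-1}^2\Bigr) + \int_0^T\|Dw^n(t)\|_s^2\,dt + \int_0^T\|D\mu^n(t)\|_{s-1}^2\,dt,
\]
and aim to show by induction that $E_n(T)\le M$ for all $n$, where $M$ is (at most) $8(\|\mu_0\|_{s-1}^2 + \|Dw_T\|_{s-1}^2)$, provided the smallness $\varepsilon T F(M)\ll 1$ holds. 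The base case $E_0(T)=0$ is immediate since $(w^0,\mu^0)=(0,0)$.

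For the inductive step, I apply $\partial^\alpha$ with $|\alpha|\le s-1$ to the $\mu$-equation \eqref{muN}, pair with $\partial^\alpha\mu^{n+1}$ in $L^2(\mathbb{T}^d)$, and integrate the divergence by parts so that one derivative lands on $\mu^{n+1}$. This yields the pointwise-in-time inequality
\[
\tfrac{1}{2}\tfrac{d}{dt}\|\mu^{n+1}\|_{s-1}^2 + \|D\mu^{n+1}\|_{s-1}^2 \le \varepsilon\,\|D\mu^{n+1}\|_{s-1}\,\bigl\|(\mu^n+\bar m)\Theta_p(t,\cdot,\mu^n,Dw^n)\bigr\|_{s-1},
\]
where Young's inequality \eqref{young} absorbs half of $\|D\mu^{n+1}\|_{s-1}^2$ into the left side, while Lemma \ref{leadingTermsSobolev} and Lemma \ref{boundsOnTheta} bound the remaining factor by a constant times $F(M)\,(1+\sqrt{M})^{s-1}\,(1+\|\mu^n\|_{s-1})$. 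The same procedure applied to \eqref{uN} with $|\alpha|\le s$ yields an analogous inequality for $w^{n+1}$; since the $w$-equation is backward parabolic the dissipation reads $\int_t^T\|Dw^{n+1}\|_s^2\,ds$, and the projection $P$ causes no trouble because it commutes with $\partial^\alpha$ and is bounded on $L^2$. Integrating in $t\in[0,T]$ and adding, one arrives at a schematic bound
\[
E_{n+1}(T) \le C_0\bigl(\|\mu_0\|_{s-1}^2 + \|Dw_T\|_{s-1}^2\bigr) + C\,\varepsilon^2\,T\,F(M)^2\,(1+M)^{2(s-1)}\,M,
\]
so that taking $M$ a sufficiently large multiple of the data and requiring $\varepsilon T F(M)$ small enough makes the second term $\le M/2$ and closes the induction.

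Having the uniform bound, I form the differences $\delta w^n:=w^{n+1}-w^n$ and $\delta\mu^n:=\mu^{n+1}-\mu^n$, which satisfy linear heat equations with forcings involving $\Theta(t,\cdot,\mu^n,Dw^n)-\Theta(t,\cdot,\mu^{n-1},Dw^{n-1})$ and the corresponding differences for $\Theta_p$. The mean value theorem, combined with the uniform bound $E_n(T)\le M$, controls these in a weaker Sobolev norm by a uniform constant times $\|\delta\mu^{n-1}\|_{s-2} + \|D\delta w^{n-1}\|_{s-2}$. The same Young-inequality plus smallness argument then yields a contraction in, say, $L^\infty([0,T];H^{s-2})\times L^\infty([0,T];H^{s-1})$, hence a strong limit $(w,\mu)$. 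Weak-$\ast$ compactness upgrades $(w,\mu)$ to membership in the strong spaces from the uniform bound, and interpolation (via Lemma \ref{elementaryInterpolation}) between the strong convergence and the uniform bound yields continuity in time at every level $s'<s$. Passage to the limit in \eqref{uN+1ByDuhamel} and \eqref{muN+1ByDuhamel} produces the solution of the $(w,\mu)$-system; since $s\ge\lceil(d+5)/2\rceil$, Sobolev embedding makes every derivative appearing in \eqref{uEquation}--\eqref{mEquation} continuous, so $(u,\bar m+\mu)$ solves the original system classically. That $\bar m+\mu$ remains a probability measure follows because \eqref{mEquation} is in divergence form (preserving spatial mean) and because positivity is inherited from the forward parabolic equation via a maximum-principle argument once the coefficients are known to be bounded.

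The principal obstacle lies in the uniform estimate for $\mu^{n+1}$: the iterative scheme decouples $\mu^{n+1}$ from $\mu^n$ inside the term $\mathrm{div}(\mu^n\Theta_p)$, so one cannot (as one might in the coupled equation itself) integrate by parts to shift a derivative from $\mu^n$ onto $\mu^{n+1}$; the lost derivative must instead be paid for entirely by parabolic smoothing, as emphasized in the remark after \eqref{expandedDivergence}. This is why the working norm must contain the dissipative $L^2([0,T];H^s)$ term, and why the gap between the regularity of $w$ (order $s$) and that of $\mu$ (order $s-1$) is exactly one derivative: it precisely matches the derivative lost in the divergence and still leaves room for the top-order piece $\Theta_{p_ip_j}\partial^2_{x_ix_j}w^n$ of \eqref{expandedDivergence} to be absorbed via the parabolic gain on $w^n$. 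The smallness of $\varepsilon T F(M)$ is exactly the mechanism that keeps the nonlinear right-hand side from overwhelming the linear dissipation.
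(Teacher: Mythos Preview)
Your proposal is correct, and its overall architecture---uniform bounds by induction on $n$, then passage to the limit---matches the paper's. However, two of your key steps take a genuinely different route.

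\textbf{Uniform bound.} You integrate the outer divergence in \eqref{muN} by parts so that it lands on $\partial^{\alpha}\mu^{n+1}$, obtaining $\varepsilon\|D\mu^{n+1}\|_{s-1}\,\|(\mu^{n}+\bar m)\Theta_{p}\|_{s-1}$ and then absorbing half of $\|D\mu^{n+1}\|_{s-1}^{2}$ into the dissipation of the \emph{current} iterate. The right-hand side then depends only on the $L^{\infty}_{t}$ quantity $M_{n}$. The paper does \emph{not} integrate this divergence by parts; instead it first expands $\mathrm{div}(\Theta_{p})$ via the chain rule \eqref{expandedDivergence} and then estimates each of the resulting terms $I$--$IV$ separately. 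The top-order pieces ($I_{A}$, $III_{A}$, $IV_{A}$, and on the $w$ side $VI_{A}$, $VII_{A}$) carry $s$ derivatives of $\mu^{n}$ or $s{+}1$ derivatives of $w^{n}$ and are controlled through the $L^{2}_{t}$ parabolic gain $N_{n}$ of the \emph{previous} iterate; this is why the paper's iterative bound \eqref{iterativeBound} carries the factor $(1+N_{n})$. Your maneuver is more economical here and sidesteps the obstruction flagged in the remark after \eqref{expandedDivergence}; that remark concerns the transport-type integration by parts (which would need $\mu^{n+1}=\mu^{n}$), not the divergence integration by parts you actually use.

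\textbf{Convergence.} You prove a contraction for the differences $(\delta w^{n},\delta\mu^{n})$ in a weaker norm and deduce a Cauchy sequence. The paper avoids any difference estimate: from the uniform bound it infers uniform bounds on $\partial_{t}\mu^{n}$ and $\partial_{t}w^{n}$, applies Arzel\`a--Ascoli to extract a uniformly convergent subsequence, uses weak compactness in $L^{2}([0,T];H^{s+1})\times L^{2}([0,T];H^{s})$ for the higher regularity, and interpolates via Lemma~\ref{elementaryInterpolation} to obtain $C([0,T];H^{s'})$-convergence for $s'<s$. Your route needs Lipschitz estimates on $\Theta$ and $\Theta_{p}$ in $H^{s-2}$ (available under {\bf(H1)}), which is extra work; the paper's route only produces a convergent subsequence, but that suffices for existence.
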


We commence with the proof of the existence theorem.
We provide some notation for certain norms which we will be useful for our estimates.
For all $n\in\mathbb{N},$ we define $M_{n}$ and $N_{n}$ to be
\begin{equation}\label{mndef}
M_{n}=\sup_{t\in[0,T]}\left(\|Dw^{n}\|_{s-1}^{2}+\|\mu^{n}\|_{s-1}^{2}\right),
\end{equation}
\begin{equation}\label{nndef}
N_{n}=\sum_{1\leq |\alpha|\leq s}\int_{0}^{T}\|\partial^{\alpha}Dw^{n}\|_{0}^{2}\ d\tau
+\sum_{0\leq |\alpha|\leq s-1}\int_{0}^{T}\|\partial^{\alpha}D\mu^{n}\|_{0}^{2}\ d\tau.
\end{equation}

We will be proving an estimate for the solutions which is uniform in $n.$  We will do so 
in stages; first, we will prove an estimate for $(w^{n+1},\mu^{n+1})$ in terms of $(w^{n},\mu^{n}).$
Then we will proceed inductively, making an assumption about $(w^{n},\mu^{n}),$ and showing that this
implies the corresponding bound holds for $(w^{n+1},\mu^{n+1}).$  This inductive step will use our smallness
assumption (which remains to be stated).

Let $\alpha$ be a multi-index (as defined in Section \ref{preliminaries}) of order $|\alpha|=s-1.$  We compute
the time derivative of the square of the $L^{2}$-norm of $\partial^{\alpha}\mu:$
\begin{multline}\label{muN+1First}
\frac{d}{dt}\frac{1}{2}\int_{\mathbb{T}^{d}}\left(\partial^{\alpha}\mu^{n+1}\right)^{2}\ dx
\\
=\int_{\mathbb{T}^{d}}\left(\partial^{\alpha}\mu^{n+1}\right)\left(\partial^{\alpha}\Delta\mu^{n+1}\right)\ dx
-\varepsilon\int_{\mathbb{T}^{d}}\left(\partial^{\alpha}\mu^{n+1}\right)\partial^{\alpha}
\left(D\mu^{n}\cdot\Theta_{p}(\cdot,x,\mu^{n},Dw^{n})\right)\ dx
\\
-\varepsilon\int_{\mathbb{T}^{d}}\left(\partial^{\alpha}\mu^{n+1}\right)\partial^{\alpha}
\left((\mu^{n}+\bar{m})\sum_{i=1}^{d}\Theta_{x_{i}p_{i}}(\cdot,x,\mu^{n},Dw^{n})\right)\ dx
\\
-\varepsilon\int_{\mathbb{T}^{d}}\left(\partial^{\alpha}\mu^{n+1}\right)\partial^{\alpha}
\left((\mu^{n}+\bar{m})\sum_{i=1}^{d}\left[\left(\Theta_{qp_{i}}(\cdot,x,\mu^{n},Dw^{n})\right)
\left(\partial_{x_{i}}\mu^{n}\right)\right]\right)\ dx\\
-\varepsilon\int_{\mathbb{T}^{d}}
\left(\partial^{\alpha}\mu^{n+1}\right)\partial^{\alpha}\left((\mu^{n}+\bar{m})\sum_{i=1}^{d}\sum_{j=1}^{d}
\left[\left(\Theta_{p_{i}p_{j}}(\cdot,x,\mu^{n},Dw^{n})\right)
\left(\partial_{x_{i}x_{j}}^{2}w^{n}\right)\right]\right)\ dx.
\end{multline}
We integrate by parts in the first integral on the right-hand side, we move the resulting integral to the left-hand side, 
and we integrate \eqref{muN+1First} in time, over the interval $[0,t]:$
\begin{multline}\label{muN+1Second}
\frac{1}{2}\int_{\mathbb{T}^{d}}\left(\partial^{\alpha}\mu^{n+1}(t,x)\right)^{2}\ dx
-\frac{1}{2}\int_{\mathbb{T}^{d}}\left(\partial^{\alpha}\mu^{n+1}(0,x)\right)^{2}\ dx
+\int_{0}^{t}\int_{\mathbb{T}^{d}}\left|D\partial^{\alpha}\mu^{n+1}\right|^{2}\ dx d\tau
\\
=
-\varepsilon\int_{0}^{t}\int_{\mathbb{T}^{d}}\left(\partial^{\alpha}\mu^{n+1}\right)\partial^{\alpha}
\left(D\mu^{n}\cdot\Theta_{p}(\tau,x,\mu^{n},Dw^{n})\right)\ dx d\tau
\\
-\varepsilon\int_{0}^{t}\int_{\mathbb{T}^{d}}\left(\partial^{\alpha}\mu^{n+1}\right)\partial^{\alpha}
\left((\mu^{n}+\bar{m})\sum_{i=1}^{d}\Theta_{x_{i}p_{i}}(\tau,x,\mu^{n},Dw^{n})\right)\ dx d\tau
\\
-\varepsilon\int_{0}^{t}\int_{\mathbb{T}^{d}}\left(\partial^{\alpha}\mu^{n+1}\right)\partial^{\alpha}
\left((\mu^{n}+\bar{m})\sum_{i=1}^{d}\left[\left(\Theta_{qp_{i}}(\tau,x,\mu^{n},Dw^{n})\right)
\left(\partial_{x_{i}}\mu^{n}\right)\right]\right)\ dx d\tau\\
-\varepsilon\int_{0}^{t}\int_{\mathbb{T}^{d}}
\left(\partial^{\alpha}\mu^{n+1}\right)\partial^{\alpha}\left((\mu^{n}+\bar{m})\sum_{i=1}^{d}\sum_{j=1}^{d}
\left[\left(\Theta_{p_{i}p_{j}}(\tau,x,\mu^{n},Dw^{n})\right)
\left(\partial_{x_{i}x_{j}}^{2}w^{n}\right)\right]\right)\ dx d\tau
\\
=I+II+III+IV.
\end{multline}

We first work to estimate $I,$  and we begin by adding and subtracting:
\begin{multline}\nonumber
I=-\varepsilon\int_{0}^{t}\int_{\mathbb{T}^{d}}(\partial^{\alpha}\mu^{n+1})
(\partial^{\alpha}D\mu^{n})\cdot\Theta_{p}(\tau,x,\mu^{n},Dw^{n})\ dxd\tau
\\
+\varepsilon\int_{0}^{t}\int_{\mathbb{T}^{d}}(\partial^{\alpha}\mu^{n+1})\left[
(\partial^{\alpha}D\mu^{n})\cdot\Theta_{p}(\tau,x,\mu^{n},Dw^{n})
-\partial^{\alpha}(D\mu^{n}\cdot\Theta_{p}(\tau,x,\mu^{n},Dw^{n}))\right]
\ dxd\tau
\\
=I_{A}+I_{B}.
\end{multline}
We start with $I_{A},$ pulling the supremum of the $\Theta_{p}$ term outside the integrals:
\begin{equation}\nonumber
I_{A}\leq \varepsilon\left(\sup_{t\in[0,T]}|\Theta_{p}(t,\cdot,\mu^{n},Dw^{n})|_{\infty}\right)
\int_{0}^{t}\int_{\mathbb{T}^{d}}|\partial^{\alpha}\mu^{n+1}||\partial^{\alpha}D\mu^{n}|\ dxd\tau.
\end{equation}
We use \eqref{thetaInfinityBound} to bound $\Theta_{p}$ in terms of $M_{n}:$
\begin{equation}\nonumber
I_{A}\leq \varepsilon F(M_{n})
\int_{0}^{t}\int_{\mathbb{T}^{d}}|\partial^{\alpha}\mu^{n+1}||\partial^{\alpha}D\mu^{n}|\ dxd\tau.
\end{equation}

Next, we continue by using \eqref{young} with positive parameter $\sigma_{1},$ which will be determined presently:
\begin{multline}\nonumber
I_{A}\leq \varepsilon F(M_{n})
\left(\frac{1}{2\sigma_{1}}\int_{0}^{T}\|\partial^{\alpha}\mu^{n+1}\|_{0}^{2}\ d\tau
+\frac{\sigma_{1}}{2}\int_{0}^{T}\|\partial^{\alpha}D\mu^{n}\|_{0}^{2}\ d\tau\right)
\\ \leq
\varepsilon F(M_{n})
\left(\frac{1}{2\sigma_{1}}\int_{0}^{T}\|\partial^{\alpha}\mu^{n+1}\|_{0}^{2}\ d\tau
+\frac{\sigma_{1}}{2}N_{n}\right).
\end{multline}
We let $\sigma_{1}=28T\varepsilon F(M_{n}),$ and this choice then yields the following:
\begin{multline}\label{IAConclusion}
I_{A}\leq \frac{1}{56 T}\int_{0}^{T}\|\partial^{\alpha}\mu^{n+1}\|_{0}^{2}\ d\tau
+14\varepsilon^{2}T(F(M_{n}))^{2}N_{n}\\
\leq
\frac{1}{56}\left(\sup_{t\in[0,T]}\|\partial^{\alpha}\mu^{n+1}\|_{0}^{2}\right)
+14\varepsilon^{2}T(F(M_{n}))^{2}N_{n}.
\end{multline}

We turn now to estimating $I_{B};$ we start by using \eqref{young} with parameter $\sigma_{2}:$
\begin{multline}\nonumber
I_{B}\leq \varepsilon\int_{0}^{t}\int_{\mathbb{T}^{d}}|(\partial^{\alpha}\mu^{n+1})|
\Big|
(\partial^{\alpha}D\mu^{n})\cdot\Theta_{p}(\tau,x,\mu^{n},Dw^{n})
-\partial^{\alpha}(D\mu^{n}\cdot\Theta_{p}(\tau,x,\mu^{n},Dw^{n}))\Big|
\ dxd\tau
\\
\leq 
\varepsilon\int_{0}^{T}\frac{1}{2\sigma_{2}}\|\partial^{\alpha}\mu^{n+1}\|_{0}^{2} d\tau
\\
+\frac{\varepsilon\sigma_{2}}{2}\int_{0}^{T}\Big\|
(\partial^{\alpha}D\mu^{n})\cdot\Theta_{p}(\tau,x,\mu^{n},Dw^{n})
-\partial^{\alpha}(D\mu^{n}\cdot\Theta_{p}(\tau,x,\mu^{n},Dw^{n}))\Big\|_{0}^{2}\ d\tau.
\end{multline}
We let $\sigma_{2}=28T\varepsilon,$ and we continue:
\begin{multline}\nonumber
I_{B}\leq \frac{1}{56}\left(\sup_{t\in[0,T]}\|\partial^{\alpha}\mu^{n+1}\|_{0}^{2}\right)
\\
+14\varepsilon^{2}T\int_{0}^{T}\Big\|
(\partial^{\alpha}D\mu^{n})\cdot\Theta_{p}(\tau,\cdot,\mu^{n},Dw^{n})
-\partial^{\alpha}(D\mu^{n}\cdot\Theta_{p}(\tau,\cdot,\mu^{n},Dw^{n}))\Big\|_{0}^{2}\ d\tau.
\end{multline}
Next, we use Lemma \ref{leadingTermsSobolev} and Sobolev embedding as follows (we temporarily suppress
the dependence of $\Theta_{p}$ on its arguments):
\begin{multline}\nonumber
\|\partial^{\alpha}(D\mu^{n}\cdot\Theta_{p})-(\partial^{\alpha}D\mu^{n})\cdot\Theta_{p}\|_{0}^{2}
\leq c\left(|D\Theta_{p}|_{\infty}\|D^{s-1}\mu^{n}\|_{0}+\|D^{s-1}\Theta_{p}\|_{0}|D\mu^{n}|_{\infty}\right)^{2}\\
\leq c\left(\|\Theta_{p}\|_{\left\lceil\frac{d+3}{2}\right\rceil}\|\mu^{n}\|_{s-1}
+\|\Theta_{p}\|_{s-1}\|\mu^{n}\|_{\left\lceil\frac{d+3}{2}\right\rceil}\right)^{2}
\leq c\|\Theta_{p}\|_{s-1}^{2}\|\mu^{n}\|_{s-1}^{2}.
\end{multline}
Here, we have used the condition $s\geq\left\lceil\frac{d+5}{2}\right\rceil.$
By Lemma \ref{boundsOnTheta}, we have
\begin{equation}\nonumber
\|\Theta_{p}\|_{s-1}\leq cF(M_{n})(1+M_{n})^{(s-1)/2}.
\end{equation}
Putting this information together, we complete our bound of $I_{B}:$
\begin{equation}\label{IBConclusion}
I_{B}\leq \frac{1}{56}\left(\sup_{t\in[0,T]}\|\partial^{\alpha}\mu^{n+1}\|_{0}^{2}\right)
+c\varepsilon^{2}T^{2}(F(M_{n}))^{2}M_{n}(1+M_{n})^{s-1}.
\end{equation}
Note that this constant $c$ is independent of $\varepsilon,$ $T,$ $n,$ and $\delta;$ 
instead it depends only upon $s$ and $d.$ 
The same will be true for any constants which we call $c$ in the sequel.

We are ready to estimate the term $II.$  We begin by using Young's inequality \eqref{young} with parameter
$\sigma_{3}=28T\varepsilon:$
\begin{multline}\label{firstStepForBoundingII}
II\leq\varepsilon\int_{0}^{T}\int_{\mathbb{T}^{d}}\frac{|\partial^{\alpha}\mu^{n+1}|^{2}}{2\sigma_{3}}
+\frac{\sigma_{3}}{2}\left|\partial^{\alpha}\left((\mu^{n}+\bar{m})\sum_{i=1}^{d}
\Theta_{x_{i}p_{i}}(\tau,x,\mu^{n},Dw^{n})\right)\right|^{2}\ dxd\tau
\\
\leq\frac{1}{56}\left(\sup_{t\in[0,T]}\|\partial^{\alpha}\mu^{n+1}\|_{0}^{2}\right)
+14\varepsilon^{2}T\int_{0}^{T}
\left\|\partial^{\alpha}\left((\mu^{n}+\bar{m})\sum_{i=1}^{d}\Theta_{x_{i}p_{i}}(\tau,\cdot,\mu^{n},Dw^{n})\right)\right\|_{0}^{2}
\ d\tau.
\end{multline}
We use the Sobolev algebra property and Lemma \ref{boundsOnTheta} to bound the integrand:
\begin{multline}\label{intermediateStepForBoundingII}
\left\|\partial^{\alpha}\left((\mu^{n}+\bar{m})\sum_{i=1}^{d}\Theta_{x_{i}p_{i}}(\tau,\cdot,\mu^{n},Dw^{n})\right)\right\|_{0}^{2}
\leq \left\|(\mu^{n}+\bar{m})\sum_{i=1}^{d}\Theta_{x_{i}p_{i}}(\tau,\cdot,\mu^{n},Dw^{n})\right\|_{s-1}^{2}
\\
\leq c\|\mu^{n}+\bar{m}\|_{s-1}^{2}(F(M_{n}))^{2}(1+M_{n})^{s-1}
\leq c(F(M_{n}))^{2}(1+M_{n})^{s}.
\end{multline}
Combining \eqref{intermediateStepForBoundingII} with \eqref{firstStepForBoundingII}, we complete our bound for the term 
$II:$
\begin{equation}\label{IIConclusion}
II\leq \frac{1}{56}\left(\sup_{t\in[0,T]}\|\partial^{\alpha}\mu^{n+1}\|_{0}^{2}\right)+
c\varepsilon^{2}T^{2}(F(M_{n}))^{2}(1+M_{n})^{s}.
\end{equation}

Before estimating $III,$ we again add and subtract to isolate the leading-order term.  We have
$III=III_{A}+III_{B},$ with $III_{A}$ given by
\begin{equation}\nonumber
III_{A}=
-\varepsilon\int_{0}^{t}\int_{\mathbb{T}^{d}}\left(\partial^{\alpha}\mu^{n+1}\right)
(\mu^{n}+\bar{m})\sum_{i=1}^{d}\left[\left(\Theta_{qp_{i}}(\tau,x,\mu^{n},Dw^{n})\right)
\left(\partial^{\alpha}\partial_{x_{i}}\mu^{n}\right)\right]\ dx d\tau,
\end{equation}
and the remainder $III_{B}$ given by
\begin{multline}\nonumber
III_{B}=
-\varepsilon\int_{0}^{t}\int_{\mathbb{T}^{d}}\left(\partial^{\alpha}\mu^{n+1}\right)\Bigg\{\partial^{\alpha}
\left((\mu^{n}+\bar{m})\sum_{i=1}^{d}\left[\left(\Theta_{qp_{i}}(\tau,x,\mu^{n},Dw^{n})\right)
\left(\partial_{x_{i}}\mu^{n}\right)\right]\right)
\\
-(\mu^{n}+\bar{m})\sum_{i=1}^{d}\left[\left(\Theta_{qp_{i}}(\tau,x,\mu^{n},Dw^{n})\right)
\left(\partial^{\alpha}\partial_{x_{i}}\mu^{n}\right)\right]\Bigg\}\ dx d\tau.
\end{multline}
We begin estimating $III_{A}$ by using the triangle inequality and pulling the lower-order terms through the integrals:
\begin{multline}\nonumber
III_{A}\leq c\varepsilon(1+M_{n})^{1/2}\left(\sup_{t\in[0,T],i\in\{1,\ldots,d\}}|\Theta_{qp_{i}}(t,\cdot,\mu^{n},Dw^{n})|_{\infty}\right)
\\
\times
\sum_{i=1}^{d}\int_{0}^{T}\int_{\mathbb{T}^{d}}|\partial^{\alpha}\mu^{n+1}||\partial^{\alpha}\partial_{x_{i}}\mu^{n}|
\ dxd\tau.
\end{multline}
Next, we use \eqref{thetaInfinityBound}, and we use Young's inequality \eqref{young} with parameter $\sigma_{4}:$
\begin{equation}\nonumber
III_{A}\leq c\varepsilon(1+M_{n})^{1/2}F(M_{n})\int_{0}^{T}\frac{1}{2\sigma_{4}}\|\partial^{\alpha}\mu^{n+1}\|_{0}^{2}
+\frac{\sigma_{4}}{2}\|\partial^{\alpha}D\mu^{n}\|_{0}^{2}\ d\tau.
\end{equation}
We take $\sigma_{4}=28c\varepsilon T(1+M_{n})^{1/2}F(M_{n}),$ and we complete our estimate of $III_{A}:$
\begin{equation}\label{IIIAConclusion}
III_{A}\leq\frac{1}{56}\left(\sup_{t\in[0,T]}\|\partial^{\alpha}\mu^{n+1}\|_{0}^{2}\right)
+c\varepsilon^{2}T(1+M_{n})(F(M_{n}))^{2}N_{n}.
\end{equation}

We next estimate $III_{B}.$  We begin by applying Young's inequality, with parameter $\sigma_{5}>0:$
\begin{multline}\nonumber
III_{B}\leq \varepsilon\int_{0}^{t}\int_{\mathbb{T}^{d}}\frac{1}{2\sigma_{5}}|\partial^{\alpha}\mu^{n+1}|^{2}\\
+\frac{\sigma_{5}}{2}\left|
\partial^{\alpha}\left((\mu^{n}+m)\sum_{i=1}^{d}(\Theta_{qp_{i}})(\partial_{x_{i}}\mu^{n})\right)
-(\mu^{n}+\bar{m})\sum_{i=1}^{d}(\Theta_{qp_{i}})(\partial^{\alpha}\partial_{x_{i}}\mu^{n})
\right|^{2}\ dxd\tau
\\
=\frac{\varepsilon}{2\sigma_{5}}\int_{0}^{t}\|\partial^{\alpha}\mu^{n+1}\|_{0}^{2}\ d\tau
\\
+\frac{\varepsilon\sigma_{5}}{2}\int_{0}^{t}
\left\|\partial^{\alpha}\left((\mu^{n}+\bar{m})\sum_{i=1}^{d}(\Theta_{qp_{i}})(\partial_{x_{i}}\mu^{n})\right)
-(\mu^{n}+\bar{m})\sum_{i=1}^{d}(\Theta_{qp_{i}})(\partial^{\alpha}\partial_{x_{i}}\mu^{n})\right\|_{0}^{2}\ d\tau.
\end{multline}
We proceed by using Lemma \ref{leadingTermsSobolev}, as in our previous estimate for the term $I_{B};$
we find the following:  
\begin{equation}\nonumber
III_{B}\leq\frac{\varepsilon T}{2\sigma_{5}}\left(\sup_{t\in[0,T]}\|\partial^{\alpha}\mu^{n+1}\|_{0}^{2}\right)
+cT\varepsilon\sigma_{5}(F(M_{n}))^{2}(1+M_{n})^{s+1}.
\end{equation}
We conclude the estimate of $III_{B}$ by setting $\sigma_{5}=28T\varepsilon:$
\begin{equation}\label{IIIBConclusion}
III_{B}\leq\frac{1}{56}\left(\sup_{t\in[0,T]}\|\partial^{\alpha}\mu^{n+1}\|_{0}^{2}\right)
+cT^{2}\varepsilon^{2}(F(M_{n}))^{2}(1+M_{n})^{s+1}.
\end{equation}

For the term $IV,$ we again must separate the leading-order term by adding and subtracting.
We write $IV=IV_{A}+IV_{B},$ with $IV_{A}$ given by
\begin{equation}\nonumber
IV_{A}=-\varepsilon\int_{0}^{t}\int_{\mathbb{T}^{d}}
\left(\partial^{\alpha}\mu^{n+1}\right)
(\mu^{n}+\bar{m})\sum_{i=1}^{d}\sum_{j=1}^{d}
\left[\left(\Theta_{p_{i}p_{j}}(\tau,x,\mu^{n},Dw^{n})\right)
\left(\partial^{\alpha}\partial_{x_{i}x_{j}}^{2}w^{n}\right)\right]\ dx d\tau,
\end{equation}
and with the remainder $IV_{B}$ given by
\begin{multline}\nonumber
IV_{B}=
\\
-\varepsilon\int_{0}^{t}\int_{\mathbb{T}^{d}}
\left(\partial^{\alpha}\mu^{n+1}\right)\Bigg\{
\partial^{\alpha}\left((\mu^{n}+\bar{m})\sum_{i=1}^{d}\sum_{j=1}^{d}
\left[\left(\Theta_{p_{i}p_{j}}(\tau,x,\mu^{n},Dw^{n})\right)
\left(\partial_{x_{i}x_{j}}^{2}w^{n}\right)\right]\right)\
\\
-(\mu^{n}+\bar{m})\sum_{i=1}^{d}\sum_{j=1}^{d}
\left[\left(\Theta_{p_{i}p_{j}}(\tau,x,\mu^{n},Dw^{n})\right)
\left(\partial^{\alpha}\partial_{x_{i}x_{j}}^{2}w^{n}\right)\right]\Bigg\}
\ dxd\tau.
\end{multline}

To estimate $IV_{A},$ we first pull $\mu^{n}+\bar{m}$ and $\Theta_{p_{i}p_{j}}$ through the integrals by taking
supremums:
\begin{equation}\nonumber
IV_{A}\leq c\varepsilon(1+M_{n})^{1/2}\left(\sup_{t\in[0,T]}\sup_{i,j}|\Theta_{p_{i}p_{j}}|_{\infty}\right)
\sum_{i,j}\int_{0}^{t}\int_{\mathbb{T}^{d}}|\partial^{\alpha}\mu^{n+1}|
|\partial^{\alpha}\partial^{2}_{x_{i}x_{j}}w^{n}|\ dxd\tau.
\end{equation}
We estimate this by using \eqref{thetaInfinityBound}, and we use Young's inequality with parameter $\sigma_{6}>0:$
\begin{equation}\nonumber
IV_{A}\leq c\varepsilon F(M_{n})(1+M_{n})^{1/2}\sum_{i,j}\int_{0}^{T}\frac{1}{2\sigma_{6}}\|\partial^{\alpha}\mu^{n+1}\|_{0}^{2}
+\frac{\sigma_{6}}{2}\|\partial^{\alpha}\partial^{2}_{x_{i}x_{j}}w^{n}\|_{0}^{2}\ d\tau.
\end{equation}
Taking $\sigma_{6}=28c\varepsilon T(1+M_{n})^{1/2}F(M_{n}),$ and proceeding as we have previously, we arrive at
our final bound for $IV_{A}:$
\begin{equation}\label{IVAConclusion}
IV_{A}\leq\frac{1}{56}\left(\sup_{t\in[0,T]}\|\partial^{\alpha}\mu^{n+1}\|_{0}^{2}\right)
+c\varepsilon^{2}T(1+M_{n})(F(M_{n}))^{2}N_{n}.
\end{equation}

We estimate $IV_{B}$ just as we have estimated $III_{B},$ finding that
\begin{equation}\label{IVBConclusion}
IV_{B}\leq\frac{1}{56}\left(\sup_{t\in[0,T]}\|\partial^{\alpha}\mu^{n+1}\|_{0}^{2}\right)
+cT^{2}\varepsilon^{2}(F(M_{n}))^{2}(1+M_{n})^{s+1}.
\end{equation}
To summarize our progress so far, we add \eqref{IAConclusion}, \eqref{IBConclusion}, \eqref{IIConclusion},
\eqref{IIIAConclusion}, \eqref{IIIBConclusion}, \eqref{IVAConclusion}, and \eqref{IVBConclusion}, and we make 
some elementary bounds, to find the following:
\begin{multline}\label{firstSumConclusion}
I+II+III+IV\\
\leq \frac{1}{8}\left(\sup_{t\in[0,T]}\|\partial^{\alpha}\mu^{n+1}\|_{0}^{2}\right)
+c\varepsilon^{2}T(F(M_{n}))^{2}\Bigg((1+T)(1+N_{n})(1+M_{n})^{s+1}\Bigg).
\end{multline}

Continuing, we compute the time derivative of the square of the $L^{2}$-norm of 
$\partial^{\alpha}\partial_{x_{j}}w^{n+1},$ substituting from \eqref{uN+1xJEvolution}:
\begin{multline}\label{ddtuN+1first}
\frac{d}{dt}\frac{1}{2}\int_{\mathbb{T}^{d}}(\partial^{\alpha}\partial_{x_{j}}w^{n+1})^{2}\ dx
=-\int_{\mathbb{T}^{d}}(\partial^{\alpha}\partial_{x_{j}}w^{n+1})(\partial^{\alpha}\Delta\partial_{x_{j}}w^{n+1})\ dx
\\
-\varepsilon\int_{\mathbb{T}^{d}}(\partial^{\alpha}\partial_{x_{j}}w^{n+1})
\partial^{\alpha}\left(\Theta_{x_{j}}(\cdot,x,\mu^{n},Dw^{n})\right)\ dx
\\
-\varepsilon\int_{\mathbb{T}^{d}}(\partial^{\alpha}\partial_{x_{j}}w^{n+1})
\partial^{\alpha}\left(\left(\Theta_{q}(\cdot,x,\mu^{n},Dw^{n})\right)\mu^{n}_{x_{j}}\right)\ dx
\\
-\varepsilon\int_{\mathbb{T}^{d}}(\partial^{\alpha}\partial_{x_{j}}w^{n+1})
\partial^{\alpha}\left(\sum_{i=1}^{d}\left[
(\Theta_{p_{i}}(\cdot,x,\mu^{n},Dw^{n}))\partial^{2}_{x_{i}x_{j}}w^{n}\right]\right)\ dx.
\end{multline}
We integrate by parts in the first integral on the right-hand side and
we integrate \eqref{ddtuN+1first} in time over the interval $[t,T];$ we also rearrange terms slightly, arriving at the
following:
\begin{multline}\label{ddtUN+1first}
\frac{1}{2}\int_{\mathbb{T}^{d}}(\partial^{\alpha}\partial_{x_{j}}w^{n+1}(t,x))^{2}\ dx
-\frac{1}{2}\int_{\mathbb{T}^{d}}(\partial^{\alpha}\partial_{x_{j}}w^{n+1}(T,x))^{2}\ dx\\
+\int_{t}^{T}\int_{\mathbb{T}^{d}}|D\partial^{\alpha}\partial_{x_{j}}w^{n+1}|^{2}
\ dxd\tau
\\
=
\varepsilon\int_{t}^{T}\int_{\mathbb{T}^{d}}(\partial^{\alpha}\partial_{x_{j}}w^{n+1})
\partial^{\alpha}\left(\Theta_{x_{j}}(\cdot,x,\mu^{n},Dw^{n})\right)\ dxd\tau
\\
+\varepsilon\int_{t}^{T}\int_{\mathbb{T}^{d}}(\partial^{\alpha}\partial_{x_{j}}w^{n+1})
\partial^{\alpha}\left(\left(\Theta_{q}(\cdot,x,\mu^{n},Dw^{n})\right)\mu^{n}_{x_{j}}\right)\ dxd\tau
\\
+\varepsilon\int_{t}^{T}\int_{\mathbb{T}^{d}}(\partial^{\alpha}\partial_{x_{j}}w^{n+1})
\partial^{\alpha}\left(\sum_{i=1}^{d}\left[
(\Theta_{p_{i}}(\cdot,x,\mu^{n},Dw^{n}))\partial^{2}_{x_{i}x_{j}}w^{n}\right]\right)\ dxd\tau
\\
=V+VI+VII.
\end{multline}

The term $V$ is straightforward to estimate; we begin with Young's inequality, with parameter $\sigma_{7}>0:$
\begin{equation}\nonumber
V\leq\varepsilon\int_{t}^{T}\int_{\mathbb{T}^{d}}\frac{1}{2\sigma_{7}}|\partial^{\alpha}\partial_{x_{j}}w^{n+1}|^{2}
+\frac{\sigma_{7}}{2}|\partial^{\alpha}\Theta_{x_{j}}|^{2}\ dxd\tau.
\end{equation}
We choose $\sigma_{7}=20\varepsilon T,$ we use Lemma \ref{boundsOnTheta}, 
and we estimate similarly to the previous terms to find the following: 
\begin{equation}\label{VConclusion}
V\leq \frac{1}{40}\left(\sup_{t\in[0,T]}\|\partial^{\alpha}\partial_{x_{j}}w^{n+1}\|_{0}^{2}\right)
+c\varepsilon^{2} T^{2}(F(M_{n}))^{2}(1+M_{n})^{s-1}.
\end{equation}

Before estimating $VI,$ we must add and subtract to isolate the leading-order term.  We have $VI=VI_{A}+VI_{B},$
with $VI_{A}$ given by
\begin{equation}\nonumber
VI_{A}=\varepsilon\int_{t}^{T}\int_{\mathbb{T}^{d}}
(\partial^{\alpha}\partial_{x_{j}}w^{n+1})
(\Theta_{q}(\tau,x,\mu^{n},Dw^{n}))(\partial^{\alpha}\partial_{x_{j}}\mu^{n})\ dxd\tau,
\end{equation}
and with the remainder $VI_{B}$ given by 
\begin{multline}\nonumber
VI_{B}=\varepsilon\int_{t}^{T}\int_{\mathbb{T}^{d}}
(\partial^{\alpha}\partial_{x_{j}}w^{n+1})\Bigg\{
\partial^{\alpha}\left(\left(\Theta_{q}(\tau,x,\mu^{n},Dw^{n})\right)
\left(\partial_{x_{j}}\mu^{n}\right)\right)
\\
-
\left(\Theta_{q}(\tau,x,\mu^{n},Dw^{n})\right)
\left(\partial^{\alpha}\partial_{x_{j}}\mu^{n}\right)\Bigg\}
\ dxd\tau.
\end{multline}

We begin estimating $VI_{A}$ by taking the supremum of $\Theta_{q}$ with respect to space and time, and pulling 
this through the integrals:
\begin{equation}\nonumber
VI_{A}\leq\varepsilon\left(\sup_{t\in[0,T]}|\Theta_{q}(t,\cdot,\mu^{n},Dw^{n})|_{\infty}\right)
\int_{t}^{T}\int_{\mathbb{T}^{d}}|\partial^{\alpha}\partial_{x_{j}}w^{n+1}||\partial^{\alpha}\partial_{x_{j}}\mu^{n}|\ dxd\tau.
\end{equation}
We then bound the $\Theta_{q}$ term by using \eqref{thetaInfinityBound} and by using Young's inequality with positive
parameter $\sigma_{8}:$
\begin{equation}\nonumber
VI_{A}\leq \varepsilon F(M_{n})\int_{t}^{T}\frac{1}{2\sigma_{8}}\|\partial^{\alpha}\partial_{x_{j}}w^{n+1}\|_{0}^{2}
+\frac{\sigma_{8}}{2}\|\partial^{\alpha}\partial_{x_{j}}\mu^{n}\|_{0}^{2}\ d\tau.
\end{equation}
We take $\sigma_{8}=20\varepsilon T F(M_{n}),$ and estimate as we have previously, finding the following:
\begin{equation}\label{VIAConclusion}
VI_{A}\leq\frac{1}{40}\left(\sup_{t\in[0,T]}\|\partial^{\alpha}\partial_{x_{j}}w^{n+1}\|_{0}^{2}\right)
+c\varepsilon^{2} T(F(M_{n}))^{2}N_{n}.
\end{equation}

We begin estimating $VI_{B}$ with use of Young's inequality, with positive parameter $\sigma_{9}:$
\begin{equation}\nonumber
VI_{B}\leq\varepsilon\int_{t}^{T}\int_{\mathbb{T}^{d}}\frac{1}{2\sigma_{9}}|\partial^{\alpha}\partial_{x_{j}}w^{n+1}|^{2}
+\frac{\sigma_{9}}{2}\left|\partial^{\alpha}((\Theta_{q})(\partial_{x_{j}}\mu^{n}))
-(\Theta_{q})(\partial^{\alpha}\partial_{x_{j}}\mu^{n})\right|^{2}
\ dxd\tau.
\end{equation}
We take $\sigma_{9}=20\varepsilon T,$ and proceed as usual:
\begin{multline}\nonumber
VI_{B}\leq
\frac{1}{40}\left(\sup_{t\in[0,T]}\|\partial^{\alpha}\partial_{x_{j}}w^{n+1}\|_{0}^{2}\right)
\\
+c\varepsilon^{2}T^{2}\left(\sup_{t\in[0,T]}
\left\|\partial^{\alpha}((\Theta_{q})(\partial_{x_{j}}\mu^{n}))-(\Theta_{q})(\partial^{\alpha}\partial_{x_{j}}\mu^{n})\right\|_{0}^{2}
\right).
\end{multline}
Using Lemma \ref{leadingTermsSobolev} and Sobolev embedding, we bound this as follows:
\begin{multline}\nonumber
VI_{B}\leq\frac{1}{40}\left(\sup_{t\in[0,T]}\|\partial^{\alpha}\partial_{x_{j}}w^{n+1}\|_{0}^{2}\right)
\\
+c\varepsilon^{2}T^{2}\left(\sup_{t\in[0,T]}
\|\Theta_{q}\|_{\left\lceil\frac{d+3}{2}\right\rceil}^{2}\|\mu^{n}\|_{s-1}^{2}
+\|\Theta_{q}\|_{s-1}^{2}\|\mu^{n}\|_{\left\lceil\frac{d+3}{2}\right\rceil}^{2}
\right).
\end{multline}
Using Lemma \ref{boundsOnTheta}, and the fact that $s$ is sufficiently large ($s\geq\left\lceil\frac{d+5}{2}\right\rceil$ is
needed here), we conclude our bound of $VI_{B}:$
\begin{equation}\label{VIBConclusion}
VI_{B}\leq
\frac{1}{40}\left(\sup_{t\in[0,T]}\|\partial^{\alpha}\partial_{x_{j}}w^{n+1}\|_{0}^{2}\right)
+c\varepsilon^{2}T^{2}(F(M_{n}))^{2}(1+M_{n})^{s}.
\end{equation}

For the term $VII,$ we must again add and subtract to isolate the leading-order contribution.  We write
$VII=VII_{A}+VII_{B},$ with $VII_{A}$ given by
\begin{equation}\nonumber
VII_{A}=\varepsilon\int_{t}^{T}\int_{\mathbb{T}^{d}}
(\partial^{\alpha}\partial_{x_{j}}w^{n+1})
\sum_{i=1}^{d}\left[\left(\Theta_{p_{i}}(\tau,x,\mu^{n},Dw^{n})\right)
\left(\partial^{\alpha}\partial_{x_{i}x_{j}}^{2}w^{n}\right)\right]\ dxd\tau,
\end{equation}
and with the remainder $VII_{B}$ given by
\begin{multline}\nonumber
VII_{B}=\varepsilon\int_{t}^{T}\int_{\mathbb{T}^{d}}
(\partial^{\alpha}\partial_{x_{j}}w^{n+1})\Bigg\{
\partial_{\alpha}\sum_{i=1}^{d}\left[\left(\Theta_{p_{i}}(\tau,x,\mu^{n},Dw^{n})\right)\partial^{2}_{x_{i}x_{j}}w^{n}\right]
\\
-\sum_{i=1}^{d}\left[\left(\Theta_{p_{i}}(\tau,x,\mu^{n},Dw^{n})\right)\left(\partial^{\alpha}\partial^{2}_{x_{i}x_{j}}w^{n}\right)
\right]\Bigg\}\ dxd\tau.
\end{multline}

To begin to estimate $VII_{A},$ we pull $\Theta_{p_{i}}$ through the integrals (after taking its supremum):
\begin{multline}\nonumber
VII_{A}\leq \sum_{i=1}^{d} \varepsilon\left(\sup_{t\in[0,T]}|\Theta_{p_{i}}(t,\cdot,\mu^{n},Dw^{n})|_{\infty}\right)\\
\times
\int_{t}^{T}\int_{\mathbb{T}^{d}}
|\partial^{\alpha}\partial_{x_{j}}w^{n+1}||\partial^{\alpha}\partial^{2}_{x_{i}x_{j}}w^{n}|\ dxd\tau.
\end{multline}
We estimate $\Theta_{p_{i}}$ by using \eqref{thetaInfinityBound}, 
and we use Young's inequality with positive parameter $\sigma_{10}:$
\begin{equation}\nonumber
VII_{A}\leq \sum_{i=1}^{d} \varepsilon F(M_{n})
\int_{t}^{T}\int_{\mathbb{T}^{d}}
\frac{1}{2\sigma_{10}}|\partial^{\alpha}\partial_{x_{j}}w^{n+1}|^{2}
+\frac{\sigma_{10}}{2}|\partial^{\alpha}\partial^{2}_{x_{i}x_{j}}w^{n}|\ dxd\tau.
\end{equation}
We choose the value $\sigma_{10}=20d\varepsilon TF(M_{n}),$ and thus find the following bound:
\begin{equation}\label{VIIAConclusion}
VII_{A}\leq \frac{1}{40}\left(\sup_{t\in[0,T]}\|\partial^{\alpha}\partial_{x_{j}}w^{n+1}\|_{0}^{2}\right)
+c\varepsilon^{2}(F(M_{n}))^{2}TN_{n}.
\end{equation}

We now estimate the final term, $VII_{B}.$  We interchange the summation and the integrals, and we use Young's inequality
with positive parameter $\sigma_{11}:$
\begin{multline}\nonumber
VII_{B}\leq \varepsilon\sum_{i=1}^{d}\int_{t}^{T}\int_{\mathbb{T}^{d}}\frac{1}{2\sigma_{11}}
|\partial^{\alpha}\partial_{x_{j}}w^{n+1}|^{2}
\\
+\frac{\sigma_{11}}{2}|\partial^{\alpha}((\Theta_{p_{i}})(\partial^{2}_{x_{i}x_{j}}w^{n}))
-(\Theta_{p_{i}})(\partial^{\alpha}\partial^{2}_{x_{i}x_{j}}w^{n})|^{2}\ dxd\tau
\\
\leq
\frac{\varepsilon T d}{2\sigma_{11}}\left(\sup_{t\in[0,T]}\|\partial^{\alpha}\partial_{x_{j}}w^{n+1}\|_{0}^{2}\right)
\\
+\frac{\sigma_{11}\varepsilon T}{2}\sum_{i=1}^{d}\left(\sup_{t\in[0,T]}
\|\partial^{\alpha}((\Theta_{p_{i}})(\partial^{2}_{x_{i}x_{j}}w^{n}))
-(\Theta_{p_{i}})(\partial^{\alpha}\partial^{2}_{x_{i}x_{j}}w^{n})\|_{0}^{2}\right).
\end{multline}
We thus choose $\sigma_{11}=20\varepsilon T d,$ and we use Lemma \ref{leadingTermsSobolev} and
Lemma \ref{boundsOnTheta} as we have previously:
\begin{equation}\label{VIIBConclusion}
VII_{B}\leq \frac{1}{40}\left(\sup_{t\in[0,T]}\|\partial^{\alpha}\partial_{x_{j}}w^{n+1}\|_{0}^{2}\right)
+c\varepsilon^{2}T^{2}(F(M_{n}))^{2}(1+M_{n})^{s}.
\end{equation}

We are now in a position to add \eqref{VConclusion}, \eqref{VIAConclusion}, \eqref{VIBConclusion},
\eqref{VIIAConclusion}, and \eqref{VIIBConclusion}; we then make some elementary estimates, finding the
following:
\begin{multline}\label{secondSumConclusion}
V+VI+VII\\
\leq\frac{1}{8}\left(\sup_{t\in[0,T]}\|\partial^{\alpha}\partial_{x_{j}}w^{n+1}\|_{0}^{2}\right)
+c\varepsilon^{2}T(F(M_{n}))^{2}\Bigg((1+T)(1+N_{n})(1+M_{n})^{s+1}\Bigg).
\end{multline}

We return to \eqref{muN+1Second}, considering \eqref{firstSumConclusion}.
We isolate the first term on the left-hand side of \eqref{muN+1Second}, finding the following bound:
\begin{multline}\nonumber
\frac{1}{2}\|\partial^{\alpha}\mu^{n+1}(t,\cdot)\|_{0}^{2}\leq \frac{1}{2}\|\partial^{\alpha}\mu^{n+1}(0,\cdot)\|_{0}^{2}
+\frac{1}{8}\left(\sup_{t\in[0,T]}\|\partial^{\alpha}\mu^{n+1}\|_{0}^{2}\right)
\\
+c\varepsilon^{2}T(F(M_{n}))^{2}\Bigg((1+T)(1+N_{n})(1+M_{n})^{s+1}\Bigg).
\end{multline}
Taking the supremum with respect to $t$ (which does not change the right-hand side) and rearranging,
we have
\begin{multline}\label{toBeAddedTo}
\frac{3}{8}\left(\sup_{t\in[0,T]}\|\partial^{\alpha}\mu^{n+1}\|_{0}^{2}\right)
\leq\frac{1}{2}\|\partial^{\alpha}\mu^{n+1}(0,\cdot)\|_{0}^{2}
\\
+c\varepsilon^{2}T(F(M_{n}))^{2}\Bigg((1+T)(1+N_{n})(1+M_{n})^{s+1}\Bigg).
\end{multline}
We next isolate the time integral on the left-hand side of \eqref{muN+1Second}, again using this with
\eqref{firstSumConclusion}.  As before, we find
\begin{multline}\nonumber
\int_{0}^{t}\|D\partial^{\alpha}\mu^{n+1}\|_{0}^{2}\ d\tau 
\leq \frac{1}{2}\|\partial^{\alpha}\mu^{n+1}(0,\cdot)\|_{0}^{2}
+\frac{1}{8}\left(\sup_{t\in[0,T]}\|\partial^{\alpha}\mu^{n+1}\|_{0}^{2}\right)
\\
+c\varepsilon^{2}T(F(M_{n}))^{2}\Bigg((1+T)(1+N_{n})(1+M_{n})^{s+1}\Bigg).
\end{multline}
We again take the supremum in time, and this again does not affect the right-hand side.  We add the result to 
\eqref{toBeAddedTo}, and rearrange to find the following:
\begin{multline}\label{toBeAddedToAgain}
\frac{1}{4}\left(\sup_{t\in[0,T]}\|\partial^{\alpha}\mu^{n+1}\|_{0}^{2}\right)
+\int_{0}^{T}\|D\partial^{\alpha}\mu^{n+1}\|_{0}^{2}\ d\tau
\\
\leq
\|\partial^{\alpha}\mu^{n+1}(0,\cdot)\|_{0}^{2}
+c\varepsilon^{2}T(F(M_{n}))^{2}\Bigg((1+T)(1+N_{n})(1+M_{n})^{s+1}\Bigg).
\end{multline}

We perform the same manipulations regarding \eqref{ddtUN+1first} and \eqref{secondSumConclusion},
and add the results to \eqref{toBeAddedToAgain}.  These considerations imply the following:
\begin{multline}\label{almostInduction}
\frac{1}{4}\left(\sup_{t\in[0,T]}\|\partial^{\alpha}\mu^{n+1}\|_{0}^{2}
+\sup_{t\in[0,T]}\|\partial^{\alpha}\partial_{x_{j}}w^{n+1}\|_{0}^{2}\right)
\\
+\int_{0}^{T}\|D\partial^{\alpha}\mu^{n+1}(\tau,\cdot)\|_{0}^{2}\ d\tau
+\int_{0}^{T}\|D\partial^{\alpha}\partial_{x_{j}}w^{n+1}(\tau,\cdot)\|_{0}^{2}\ d\tau
\\
\leq \|\partial^{\alpha}\mu^{n+1}(0,\cdot)\|_{0}^{2}
+\|\partial^{\alpha}\partial_{x_{j}}w^{n+1}(T,\cdot)\|_{0}^{2}
\\
+c\varepsilon^{2}T(F(M_{n}))^{2}\Bigg((1+T)(1+N_{n})(1+M_{n})^{s+1}\Bigg).
\end{multline}

We sum \eqref{almostInduction} over multi-indices $\alpha$ such that $0\leq|\alpha|\leq s-1$ and also over
natural numbers $j$ such that $1\leq j\leq n,$ and we multiply by $4;$ this results in the following:
\begin{multline}\nonumber
M_{n+1}+4N_{n+1}\leq 4\|\mu^{n+1}(0,\cdot)\|_{s-1}^{2}+4\|Dw^{n+1}(T,\cdot)\|_{s-1}^{2}
\\
+c\varepsilon^{2}T(F(M_{n}))^{2}\Bigg((1+T)(1+N_{n})(1+M_{n})^{s+1}\Bigg).
\end{multline}
We substitute the boundary conditions \eqref{muN+1BC} and \eqref{uN+1BC}:
\begin{multline}\label{iterativeBound}
M_{n+1}+4N_{n+1}\leq 4\|\mathbb{P}_{\delta}\mu_{0}\|_{s-1}^{2}+4\|D\mathbb{P}_{\delta}w_{T}\|_{s-1}^{2}
\\
+c\varepsilon^{2}T(F(M_{n}))^{2}\Bigg((1+T)(1+N_{n})(1+M_{n})^{s+1}\Bigg).
\end{multline}

We are now ready to both state our smallness constraint and make our inductive hypothesis.  Let 
$\mathcal{S}\in\mathbb{R}$ satisfy
\begin{equation}\nonumber
4\|\mu_{0}\|_{s-1}^{2}+4\|Dw_{T}\|_{s-1}^{2} \leq \mathcal{S}.
\end{equation}
Note that because of the definition of the smoothing operator $\mathbb{P}_{\delta}$ and Plancherel's theorem, an
immediate consequence is
\begin{equation}\nonumber
4\|\mathbb{P}_{\delta}\mu_{0}\|_{s-1}^{2}+4\|D\mathbb{P}_{\delta}w_{T}\|_{s-1}^{2} 
\leq4\|\mu_{0}\|_{s-1}^{2}+4\|Dw_{T}\|_{s-1}^{2}
\leq\mathcal{S},\qquad \forall\delta>0.
\end{equation}
Our smallness assumption is:\\

\noindent
{\bf (H2)} The function $F$ and the constants $c,$ $\varepsilon,$ $T,$ and $\mathcal{S}$ satisfy
\begin{equation}\nonumber
c\varepsilon^{2}T(F(2\mathcal{S}))^{2}\Bigg((1+T)(1+2\mathcal{S})^{s+2}\Bigg)\leq\mathcal{S}.
\end{equation}

Then, our inductive hypothesis is that 
\begin{equation}\label{inductiveHypothesis}
M_{n}+4N_{n}\leq 2\mathcal{S}.
\end{equation}
It is trivial to see that when $n=0,$ since $\mu^{0}=w^{0}=0,$ that $M_{0}+4N_{0}\leq 2\mathcal{S}.$
We assume the inductive hypothesis for some $n\in\mathbb{N}.$  Then, we see that 
$M_{n}\leq 2\mathcal{S}$ and $N_{n}\leq 2\mathcal{S}$ as well.  Since the function $F$ is monotonic, 
we have $F(M_{n})\leq F(2\mathcal{S}).$  Combining the inductive hypothesis \eqref{inductiveHypothesis}
with {\bf(H2)} and the bound
\eqref{iterativeBound}, we conclude $M_{n+1}+4N_{n+1}\leq 2\mathcal{S}.$  Thus, we have proved
that for all $n\in\mathbb{N},$ \eqref{inductiveHypothesis} holds.

The estimate \eqref{inductiveHypothesis}, together with the definition of $M_{n},$ implies that
the sequence $(\mu^{n},w^{n})$ is bounded in the space
$C([0,T];H^{s-1}\times H^{s}),$ uniformly with respect to $n.$  
Our specification of $s$ is sufficiently large so that inspection of \eqref{muN}, \eqref{uN}
shows that $\mu^{n}_{t}$ and $w^{n}_{t}$ are uniformly bounded.  From this, we are able to conclude that
$(\mu^{n},w^{n})$ forms an equicontinuous family, with compact domain $[0,T]\times \mathbb{T}^{d}.$
Applying the Arzela-Ascoli theorem, we find that a subsequence converges uniformly to a limit 
$(\mu,w)\in (C([0,T]\times\mathbb{T}^{d}))^{2}.$  This implies, since the domain is compact, that the convergence
also holds in $C([0,T];H^{0}\times H^{0}).$  Applying Lemma \ref{elementaryInterpolation}, using the 
uniform bound in $H^{s+1}\times H^{s},$ we also find convergence in the space $C([0,T];H^{s'-1}\times H^{s'}),$
for any $s'\in[0,s).$

The uniform bound \eqref{inductiveHypothesis} also implies that the sequence 
$w^{n}$ is uniformly bounded in $L^{2}([0,T];H^{s+1})$
and the sequence $\mu^{n}$ is uniformly bounded in $L^{2}([0,T];H^{s}).$  These are Hilbert spaces and therefore 
bounded sequences have subsequences with weak limits, with the weak limits obeying the same bounds.  So, we may take
$w\in L^{2}([0,T];H^{s+1})$ and $\mu\in L^{2}([0,T];H^{s}),$ with the bound
\begin{equation}\label{limitGainParabolic}
\int_{0}^{T}\|w(t,\cdot)\|_{H^{s+1}}^{2}+\|\mu(t,\cdot)\|_{H^{s}}\ dt \leq c\mathcal{S},
\end{equation}
with $c$ being an absolute constant related to the definitions of the norms.

Integrating \eqref{muN}, \eqref{uN} in time, and using the boundary conditions \eqref{muN+1BC} and \eqref{uN+1BC},
we see that $\mu^{n+1}$ and $w^{n+1}$ satisfy the equations
\begin{equation}\label{muN+1Integrated}
\mu^{n+1}(t,\cdot)=\mathbb{P}_{\delta}\mu_{0}+\int_{0}^{t}\left[\Delta\mu^{n+1}(\tau,\cdot)
-\varepsilon\mathrm{div}\left(\left(\bar{m}+\mu^{n}(\tau,\cdot)\right)\Theta_{p}(\tau,\cdot,\mu^{n},Dw^{n})\right)
\right]\ d\tau,
\end{equation}
\begin{equation}\label{uN+1Integrated}
w^{n+1}(t,\cdot)=\mathbb{P}_{\delta}w_{T}+\int_{t}^{T}\left[
\Delta w^{n+1}(\tau,\cdot)+\varepsilon\Theta(\tau,\cdot,\mu^{n},Dw^{n})\right]
\ d\tau.
\end{equation}
We have established sufficient regularity of the solution $(\mu,w)$ to pass to the limit as $n\rightarrow\infty$ and as
$\delta\rightarrow 0$
in \eqref{muN+1Integrated} and \eqref{uN+1Integrated}; taking these limits, we have
\begin{equation}\label{muLimitIntegrated}
\mu(t,\cdot)=\mu_{0}+\int_{0}^{t}\left[\Delta\mu(\tau,\cdot)-\varepsilon\mathrm{div}
\left(\left(\bar{m}+\mu(\tau,\cdot)\right)\Theta_{p}(\tau,\cdot,\mu,Dw)\right)\right]\ d\tau,
\end{equation}
\begin{equation}\label{uLimitIntegrated}
w(t,\cdot)=w_{T}+\int_{t}^{T}\left[\Delta w(\tau,\cdot)+\varepsilon P\Theta(\tau,\cdot,\mu,Dw)\right]
\ d\tau.
\end{equation}
Differentiating \eqref{muLimitIntegrated} and \eqref{uLimitIntegrated} with respect to time, we find that $(\mu, w)$
are classical solutions of \eqref{uMu1} and \eqref{uMu2}, with the boundary values $\mu_{0}$ and
$w_{T}.$  

To recover $m$ from $\mu,$ one simply needs to add $\bar{m}.$  To recover $u$ from $w$ and $\mu,$ one simply integrates
\eqref{uEquation} with respect to $t,$ since the right-hand side is determined in terms of $w$ and $m.$

This completes the proof.  
We conclude with a more formal statement of what we have proved.
\begin{theorem} \label{existenceTheorem}
Let $T>0$ and $\varepsilon>0$ be given.  
Let $s\geq \left\lceil\frac{d+5}{2}\right\rceil$ and let $\mu_{0}\in H^{s-1}(\mathbb{T}^{d})$ be such that $\bar{m}+\mu_{0}$ is a 
probability measure.  Let $u_{T}\in H^{s}(\mathbb{T}^{d})$ be given.  Assume that the conditions
{\bf(H1)} and {\bf(H2)} are satisfied.  Then there exists $\mu\in L^{\infty}([0,T];H^{s-1})\cap L^{2}([0,T];H^{s})$ and
there exists $u\in L^{\infty}([0,T];H^{s})\cap L^{2}([0,T];H^{s+1})$ such that $\bar{m}+\mu$ is a probability measure for all
$t\in[0,T],$ and such that $(u,\bar{m}+\mu)$ is a classical solution of 
\eqref{uEquation}, \eqref{mEquation}, \eqref{planningBC}.
Furthermore, for all $s'\in[0,s),$ we have $\mu\in C([0,T];H^{s'-1})$ and $u\in C([0,T];H^{s'}).$
\end{theorem}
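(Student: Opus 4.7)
The plan is to establish a uniform-in-$n$ bound on the approximating sequence $(w^n, \mu^n)$ introduced in Section \ref{formulation}, extract a convergent subsequence, and verify that the limit solves the mean field games system. Because the initialization $w^0 = \mu^0 = 0$ is trivially small, and because the iteration \eqref{muN}--\eqref{uN+1BC} decouples into a forward heat equation for $\mu^{n+1}$ and a backward heat equation for $w^{n+1}$ with right-hand sides depending only on the previous iterate, the natural route is an inductive estimate: assume a bound of the form $M_n + 4N_n \leq 2\mathcal{S}$ for the quantities defined in \eqref{mndef}, \eqref{nndef}, and show that the same bound propagates to index $n+1$ provided $\varepsilon T F(2\mathcal{S})^2 (1+T)(1+2\mathcal{S})^{s+2}$ is small. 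This is exactly what hypothesis {\bf(H2)} encodes.

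The core of the argument is an energy estimate at the $H^{s-1}$ level for $\mu^{n+1}$ and at the $H^s$ level for $w^{n+1}$, obtained by multiplying \eqref{muN} and the $\partial_{x_j}$-differentiated equation \eqref{uN+1xJEvolution} by $\partial^\alpha \mu^{n+1}$ and $\partial^\alpha \partial_{x_j} w^{n+1}$ respectively, with $|\alpha| = s-1$, and integrating in space and time. Integration by parts on the Laplacian terms produces the parabolic smoothing integrals $\int_0^T \|D\partial^\alpha \mu^{n+1}\|_0^2$ and $\int_t^T \|D\partial^\alpha\partial_{x_j} w^{n+1}\|_0^2$ on the left. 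Every nonlinear term on the right is then controlled via Young's inequality \eqref{young} with a carefully chosen parameter $\sigma$: one contribution feeds back into the $L^\infty_t L^2_x$ norm of the current iterate with a small coefficient (to be absorbed into the left-hand side), while the other picks up a factor proportional to $\varepsilon T$ (or $\varepsilon^2 T$ after squaring) which, combined with Lemma \ref{boundsOnTheta}, produces a right-hand side of the form $c\varepsilon^2 T F(M_n)^2 (1+T)(1+N_n)(1+M_n)^{s+1}$.

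The conceptual difficulty, and what I expect to be the main obstacle, is the transport-type term $\mathrm{div}(\mu^n \Theta_p(\cdot,\cdot,\mu^n, Dw^n))$ in \eqref{muN}: ordinarily the top-order piece $\Theta_p \cdot D\partial^\alpha \mu^n$ would be handled by an integration by parts turning $\Theta_p$ into its divergence, but here the velocity field involves the previous iterate rather than the current one, so that cancellation is unavailable. The remedy is to split this term into a principal piece (treated by Cauchy--Schwarz and Young's inequality, with the $D\partial^\alpha \mu^n$ factor absorbed into $N_n$) and a commutator piece bounded by Lemma \ref{leadingTermsSobolev}; the latter requires $s \geq \lceil (d+5)/2 \rceil$ so that Sobolev embedding controls $|D\mu^n|_\infty$ and $|D\Theta_p|_\infty$ by $\|\mu^n\|_{s-1}$ and $\|\Theta_p\|_{s-1}$. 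The same strategy applies to the second-order term $\Theta_{p_ip_j} \partial^2_{x_ix_j} w^n$ in the $\mu$ equation and the transport-type terms in the $w$ equation.

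After summing over $|\alpha| \leq s-1$ and over $j = 1, \ldots, d$, substituting the initial/terminal data through the smoothing $\mathbb{P}_\delta$ (which does not enlarge Sobolev norms, by Plancherel), and applying {\bf(H2)} together with the inductive hypothesis, the bound $M_{n+1} + 4N_{n+1} \leq 2\mathcal{S}$ closes the induction. The remaining task is compactness: the uniform bound on $(\mu^n, w^n)$ in $L^\infty_t H^{s-1} \times L^\infty_t H^s$ together with a uniform bound on the time derivatives (read off directly from \eqref{muN} and \eqref{uN}) yields equicontinuity in time, so Arzela--Ascoli produces a subsequence converging in $C([0,T] \times \mathbb{T}^d)^2$ and, after interpolation via Lemma \ref{elementaryInterpolation}, in $C([0,T]; H^{s'-1} \times H^{s'})$ for $s' < s$. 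Weak compactness of bounded sets in the Hilbert spaces $L^2_t H^s$ and $L^2_t H^{s+1}$ provides the parabolic gain. Finally one passes to the limit in the Duhamel forms \eqref{muN+1Integrated}--\eqref{uN+1Integrated} (also letting $\delta \to 0$), differentiates in time to recover the classical equations \eqref{uMu1}--\eqref{uMu2}, and reconstructs $(u, m) = (w + \text{mean}, \bar{m} + \mu)$, noting that $\bar{m} + \mu$ remains a probability measure since $\mu$ solves a conservative equation with initial mean zero.
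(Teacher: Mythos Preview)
Your proposal is correct and follows essentially the same approach as the paper: an inductive energy estimate on $M_n + 4N_n$ using Young's inequality with tuned parameters, the principal-plus-commutator splitting (via Lemma~\ref{leadingTermsSobolev}) to handle the transport-type terms at the previous iterate, parabolic smoothing to absorb the top-order pieces into $N_n$, closure of the induction by {\bf(H2)}, and then Arzel\`a--Ascoli plus interpolation and weak $L^2_t H^{s}$/$L^2_t H^{s+1}$ compactness to pass to the limit in the integrated equations. The paper carries out exactly this program term by term (labelled $I$ through $VII$), so your outline matches it in both structure and detail.
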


\begin{remark} \label{existenceRemark}
In \cite{ambroseMFG2}, we gave two existence theorems for non-separable mean field games
with data in the Wiener algebra, with each of these theorems having a different smallness constraint.  Here, we treat
up to three different sources of smallness in a unified constraint.  Clearly, either by taking $\varepsilon$ sufficiently small
for fixed $T$ and $\mathcal{S},$  or by instead taking $T$ sufficiently small for fixed
$\varepsilon$ and $\mathcal{S},$ we may satisfy {\bf(H2)}.  The third source of smallness depends on the form of the
Hamiltonian; if, for instance 
$\mathcal{H}(t,x,m,Du)=m|Du|^{4},$ then the function $F$ could go to zero with $\mathcal{S},$ and
by taking $\mathcal{S}$ sufficiently small, with fixed values of $\varepsilon$ and $T,$ the condition ${\bf(H2)}$ would be
satisfied.  For other choices of the Hamiltonian, however, it may not be the case that $F$ vanishes as $\mathcal{S}$ vanishes.
In summary, this unified condition treats the size of the time horizon, the strength of the coupling in the model, 
and in some cases, the size of the data.  
\end{remark}

\section{Uniqueness}\label{uniquenessSection}

We consider two solutions, $(w^{1},\mu^{1})$ and $(w^{2},\mu^{2})$ in $H^{s}\times H^{s-1},$ for $s>2+\frac{d}{2},$
with the norm of these solutions bounded in these spaces by some $K>0.$
We define $E(t)=E_{\mu}(t)+E_{w}(t),$ with 
\begin{equation}\nonumber
E_{\mu}(t)=\frac{1}{2}\int_{\mathbb{T}^{d}}\left(\mu^{1}(t,x)-\mu^{2}(t,x)\right)^{2}\ dx,
\end{equation}
\begin{equation}\nonumber
E_{w}(t)=\frac{1}{2}\sum_{i=1}^{d}\int_{\mathbb{T}^{d}}\left(\partial_{x_{i}}w^{1}(t,x)-\partial_{x_{i}}w^{2}(t,x)\right)^{2}\ dx.
\end{equation}
Thus, we are measuring the difference of $Dw$ in $L^{2}$ and the difference of $\mu$ also in $L^{2}.$

We must have a Lipschitz property for the Hamiltonian for our uniqueness argument.
We make the following assumption:\\

\noindent{\bf (H3)} For all multi-indices $\beta$ (as described in the beginning of Section 3) with $0\leq |\beta| \leq 2,$
for any $(p^{i},q^{i})$ in a bounded subset of $\mathbb{R}^{d+1},$ there exists a constant $c>0$ such that
\begin{equation}\nonumber
\left|\partial^{\beta}\Theta(t,x,p^{1},q^{1})-\partial^{\beta}\Theta(t,x,p^{2},q^{2})\right|
\leq c \left(|p^{1}-p^{2}| + \sum_{i=1}^{d}|q^{1}_{i}-q^{2}_{i}|\right).
\end{equation}

We are now able to state our uniqueness theorem
\begin{theorem}\label{uniquenessTheorem}
 Let $(u^{1},\bar{m}+\mu^{1})$ and $(u^2,\bar{m}+\mu^{2})$ be two classical solutions of 
\eqref{uEquation}, \eqref{mEquation}, \eqref{planningBC}, with the same data:
\begin{equation}\nonumber
m^{1}(0,\cdot)=m^{2}(0,\cdot),\qquad u^{1}(T,\cdot)=u^{2}(T,\cdot).
\end{equation}
Assume that there exists $K$ such that the solutions are each bounded by $K:$
\begin{equation}\nonumber
\|Du^{i}\|_{H^{s-1}}+\|\mu^{i}\|_{H^{s-1}} \leq K,\qquad i\in\{1,2\},
\end{equation}
for some $s>2+\frac{d}{2}.$  Assume {\bf(H3)} holds.  There exists a nondecreasing 
function $\mathcal{G}:[0,\infty)\rightarrow[0,\infty)$ such that if 
$\varepsilon T (\mathcal{G}(K))<1,$ then $(u^{1},\mu^{1})=(u^{2},\mu^{2}).$
\end{theorem}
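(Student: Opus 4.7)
The plan is to derive two coupled parabolic energy estimates for the differences $\delta\mu := \mu^{1}-\mu^{2}$ and $\delta w := w^{1}-w^{2}$, one obtained by integrating the (forward) $\mu$-equation against $\delta\mu$ and the other by integrating the (backward) $w$-equation against $\partial_{x_{j}}\delta w$. Since $\delta\mu(0,\cdot)\equiv 0$ and $\delta w(T,\cdot)\equiv 0$, each equation integrates in its natural time direction without boundary contributions, and each produces both an $L^{2}$-in-time $L^{2}$-in-space bound and a parabolic smoothing gain ($\int\|D\delta\mu\|_{0}^{2}$ and $\int\|D^{2}\delta w\|_{0}^{2}$, respectively). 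The hypothesis \textbf{(H3)}, combined with Sobolev embedding and the a priori bound $K$, yields pointwise Lipschitz control on the differences of $\Theta$ and its derivatives, which reduces the nonlinear residuals to $O(\sqrt{E})$ up to top-order terms that must be absorbed by the parabolic smoothing.

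For the forward half, after subtracting, testing against $\delta\mu$, and integrating by parts in both the Laplacian and the divergence, the identity is
\begin{equation*}
E_{\mu}(t) + \int_{0}^{t}\|D\delta\mu\|_{0}^{2}\,d\tau = \varepsilon\int_{0}^{t}\!\int_{\mathbb{T}^{d}} D\delta\mu\cdot\mathcal{A}\,dx\,d\tau,
\end{equation*}
with $\mathcal{A} := (\bar m+\mu^{1})\Theta_{p}(\tau,x,\mu^{1},Dw^{1})-(\bar m+\mu^{2})\Theta_{p}(\tau,x,\mu^{2},Dw^{2})$. Splitting $\mathcal{A}$ via add-and-subtract as $(\bar m+\mu^{1})[\Theta_{p}^{1}-\Theta_{p}^{2}]+(\delta\mu)\Theta_{p}^{2}$ and applying \textbf{(H3)} and the embedding $H^{s-1}\hookrightarrow L^{\infty}$ yields $\|\mathcal{A}\|_{0}\leq C(K)\sqrt{2E(\tau)}$. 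Young's inequality then absorbs half of the parabolic term and produces a bound of the form $E_{\mu}(t)+\tfrac12\int_{0}^{t}\|D\delta\mu\|_{0}^{2}\,d\tau\leq \varepsilon^{2}C_{1}(K)\int_{0}^{t}E(\tau)\,d\tau$.

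For the backward half, apply $\partial_{x_{j}}$ to each $w$-equation (noting $\partial_{x_{j}}P=\partial_{x_{j}}$), subtract, test against $\partial_{x_{j}}\delta w$, and integrate in $\tau\in[t,T]$. The wrong-sign Laplacian now produces positive parabolic smoothing on the left:
\begin{equation*}
\tfrac12\|\partial_{j}\delta w(t)\|_{0}^{2}+\int_{t}^{T}\|D\partial_{j}\delta w\|_{0}^{2}\,d\tau = \varepsilon\int_{t}^{T}\!\int_{\mathbb{T}^{d}}(\partial_{j}\delta w)\cdot\partial_{j}[\Theta^{1}-\Theta^{2}]\,dx\,d\tau.
\end{equation*}
Expanding $\partial_{j}\Theta=\Theta_{x_{j}}+\Theta_{q}\partial_{j}\mu+\sum_{i}\Theta_{p_{i}}\partial^{2}_{ij}w$ and splitting each difference by add-and-subtract, \textbf{(H3)} together with the embedding $H^{s-1}\hookrightarrow W^{1,\infty}$ (which uses $s>2+\tfrac{d}{2}$, giving the pointwise bounds on $\partial_{j}\mu^{2}$ and $\partial^{2}_{ij}w^{2}$ by $K$) yields the pointwise estimate $|\partial_{j}[\Theta^{1}-\Theta^{2}]|\leq C(K)\bigl(|\delta\mu|+|D\delta w|+|D\delta\mu|+|D^{2}\delta w|\bigr)$. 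The dangerous top-order pieces $|D\delta\mu|$ and $|D^{2}\delta w|$ are not controlled by $\sqrt{E}$; they are handled by Young's inequality, with their squared norms absorbed into the parabolic smoothing from the forward estimate and from the present backward estimate respectively.

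Summing the two estimates, executing the absorption described above, and taking the supremum over $t\in[0,T]$ yields $\sup_{[0,T]} E \leq T\,\widetilde{C}(\varepsilon,K)\,\sup_{[0,T]}E$, where $\widetilde C$ is nondecreasing in $K$ and vanishes with $\varepsilon$; choosing $\mathcal{G}$ so that the stated smallness hypothesis $\varepsilon T \mathcal{G}(K) < 1$ makes the right-hand coefficient strictly less than one forces $\sup E=0$, and hence $\mu^{1}=\mu^{2}$ and $Dw^{1}=Dw^{2}$. Since $w=Pu$ only removes the spatial mean, and both $u^{i}$ satisfy the identical $u$-equation with the same terminal datum $u_{T}$, integrating \eqref{uEquation} in time recovers $u^{1}=u^{2}$. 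The main technical obstacle is precisely the trading of the top-order derivatives $|D\delta\mu|$ and $|D^{2}\delta w|$ appearing in the $\partial_{j}\Theta$-difference against the parabolic smoothing on the left-hand side: these cannot be bounded by $\sqrt{E}$ alone, and managing the bookkeeping between the forward and backward estimates so that both of these absorptions succeed is what dictates the structure of the smallness condition.
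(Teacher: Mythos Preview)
Your proposal is correct and follows the same overall architecture as the paper (forward $L^{2}$ energy for $\delta\mu$, backward $L^{2}$ energy for $D\delta w$, parabolic smoothing to absorb top-order terms, then a sup-in-time closure), but there is one genuine structural difference worth pointing out.

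In the forward $\mu$-estimate you integrate by parts on the \emph{entire} divergence, so the right-hand side becomes $\varepsilon\int D\delta\mu\cdot\mathcal{A}$ with $\mathcal{A}$ zeroth order in $(\delta\mu,D\delta w)$; a single application of Young's inequality then makes the $\mu$-estimate completely self-contained. The paper instead expands the divergence via \eqref{expandedDivergence} and obtains fourteen terms $V_{1},\dots,V_{14}$, two of which ($V_{9}$ and $V_{14}$, carrying $\partial^{2}_{ij}\delta w$) are \emph{not} self-contained and must be absorbed by the parabolic smoothing coming from the $w$-equation. Thus the paper has two-way cross-absorption (each equation borrows from the other), while your argument has only one-way borrowing (only the $w$-estimate needs $\int\|D\delta\mu\|_{0}^{2}$ from the $\mu$-side). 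Your route is shorter and avoids the fourteen-term decomposition; the paper's route, on the other hand, allows it to distinguish two pieces $\mathcal{G}_{1}(K)$ and $\mathcal{G}_{2}(K)$ of the constant, with $\mathcal{G}_{1}$ vanishing as $K\to0$, which feeds into Remark~\ref{uniquenessRemark} about uniqueness for small data.

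One point you leave implicit: when you say the $\int_{t}^{T}\|D\delta\mu\|_{0}^{2}$ arising in the backward estimate is ``absorbed into the parabolic smoothing from the forward estimate,'' note that the forward smoothing is $\int_{0}^{t}\|D\delta\mu\|_{0}^{2}$, over the \emph{complementary} interval. Because your forward estimate is self-contained, you can simply evaluate it at $t=T$ to obtain a bound on $\int_{0}^{T}\|D\delta\mu\|_{0}^{2}$ and then dominate $\int_{t}^{T}$ by $\int_{0}^{T}$; the paper, facing the same interval mismatch but with two-way coupling, has to do the slightly more elaborate maneuver in \eqref{workaroundParabolic1}--\eqref{workaroundParabolic3}. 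Either way the closure goes through, and your final inequality matches the paper's \eqref{finalUniqueness}.
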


\begin{proof}
To estimate the growth of the difference of the two solutions, we take the time derivative of $E,$ starting with $E_{\mu}.$
To begin, we have simply
\begin{equation}\nonumber
\frac{dE_{\mu}}{dt}=\int_{\mathbb{T}^{d}}(\mu^{1}-\mu^{2})(\mu^{1}_{t}-\mu^{2}_{t})\ dx.
\end{equation}
We substitute for $\mu^{1}_{t}$ and $\mu^{2}_{t}$ from \eqref{uMu2}, and we do some preliminary adding 
and subtracting.  This leads us to the expression
\begin{multline}\nonumber
\frac{dE_{\mu}}{dt}=\int_{\mathbb{T}^{d}}(\mu^{1}-\mu^{2})\Delta(\mu^{1}-\mu^{2})\ dx
\\
-\varepsilon\int_{\mathbb{T}^{d}}(\mu^{1}-\mu^{2})\mathrm{div}\left(
(\mu^{1}-\mu^{2})\Theta_{p}(t,x,\mu^{1},Dw^{1})\right)\ dx
\\
-\varepsilon\int_{\mathbb{T}^{d}}(\mu^{1}-\mu^{2})\mathrm{div}\left(
\mu^{2}\left(\Theta_{p}(t,x,\mu^{1},Dw^{1})-\Theta_{p}(t,x,\mu^{2},Dw^{2})\right)\right)\ dx
\\
-\varepsilon\bar{m}\int_{\mathbb{T}^{d}}(\mu^{1}-\mu^{2})\mathrm{div}\left(
\Theta_{p}(t,x,\mu^{1},Dw^{1})-\Theta_{p}(t,x,\mu^{2},Dw^{2})\right)\ dx.
\end{multline}
We apply the divergence operators on the right-hand side, making the expansion
\begin{equation}\nonumber
\frac{dE_{\mu}}{dt}=\sum_{\ell=1}^{14}V_{\ell},
\end{equation}
where we now list each of these terms:
\begin{equation}\nonumber
V_{1}=\int_{\mathbb{T}^{d}}(\mu^{1}-\mu^{2})\Delta(\mu^{1}-\mu^{2})\ dx,
\end{equation}
\begin{equation}\nonumber
V_{2}=-\varepsilon\int_{\mathbb{T}^{d}}(\mu^{1}-\mu^{2})\left(\nabla\left(\mu^{1}-\mu^{2}\right)\right)\cdot
\Theta_{p}(t,x,\mu^{1},Dw^{1})\ dx,
\end{equation}
\begin{equation}\nonumber
V_{3}=-\varepsilon\int_{\mathbb{T}^{d}}\left(\mu^{1}-\mu^{2}\right)^{2}
\mathrm{div}\left(\Theta_{p}(t,x,\mu^{1},Dw^{1})\right)\ dx,
\end{equation}
\begin{equation}\nonumber
V_{4}=-\varepsilon\int_{\mathbb{T}^{d}}(\mu^{1}-\mu^{2})(\nabla\mu^{2})\cdot
\left(\Theta_{p}(t,x,\mu^{1},Dw^{1})-\Theta_{p}(t,x,\mu^{2},Dw^{2})\right)\ dx,
\end{equation}
\begin{equation}\nonumber
V_{5}=-\varepsilon\int_{\mathbb{T}^{d}}(\mu^{1}-\mu^{2})(\mu^{2})\sum_{i=1}^{d}\left[
\Theta_{p_{i}x_{i}}(t,x,\mu^{1},Dw^{1})-\Theta_{p_{i}x_{i}}(t,x,\mu^{2},Dw^{2})\right]\ dx,
\end{equation}
\begin{equation}\nonumber
V_{6}=-\varepsilon\int_{\mathbb{T}^{d}}(\mu^{1}-\mu^{2})(\mu^{2})\sum_{i=1}^{d}
\left[\Theta_{p_{i}q}(t,x,\mu^{1},Dw^{1})\frac{\partial\mu^{1}}{\partial x_{i}}
-\Theta_{p_{i}q}(t,x,\mu^{2},Dw^{2})\frac{\partial\mu^{1}}{\partial x_{i}}\right]\ dx,
\end{equation}
\begin{equation}\nonumber
V_{7}=-\varepsilon\int_{\mathbb{T}^{d}}(\mu^{1}-\mu^{2})(\mu^{2})\sum_{i=1}^{d}\left[
\Theta_{p_{i}q}(t,x,\mu^{2},Dw^{2})\left(\frac{\partial(\mu^{1}-\mu^{2})}{\partial x_{i}}\right)
\right]\ dx,
\end{equation}
\begin{multline}\nonumber
V_{8}=-\varepsilon\int_{\mathbb{T}^{d}}(\mu^{1}-\mu^{2})(\mu^{2})\sum_{i=1}^{d}\sum_{j=1}^{d}\Bigg[
\Theta_{p_{i}p_{j}}(t,x,\mu^{1},Dw^{1})\frac{\partial^{2}w^{1}}{\partial x_{i}\partial x_{j}}
\\
-\Theta_{p_{i}p_{j}}(t,x,\mu^{2},Dw^{2})\frac{\partial^{2}w^{1}}{\partial x_{i}\partial x_{j}}
\Bigg]\ dx,
\end{multline}
\begin{equation}\nonumber
V_{9}=-\varepsilon\int_{\mathbb{T}^{d}}(\mu^{1}-\mu^{2})(\mu^{2})\sum_{i=1}^{d}\sum_{j=1}^{d}\left[
\Theta_{p_{i}p_{j}}(t,x,\mu^{2},Dw^{2})\left(\frac{\partial^{2}(w^{1}-w^{2})}{\partial x_{i}\partial x_{j}}\right)
\right]\ dx,
\end{equation}
\begin{equation}\nonumber
V_{10}=-\varepsilon\bar{m}\int_{\mathbb{T}^{d}}(\mu^{1}-\mu^{2})\sum_{i=1}^{d}\left[
\Theta_{p_{i}x_{i}}(t,x,\mu^{1},Dw^{1})-\Theta_{p_{i}x_{i}}(t,x,\mu^{2},Dw^{2})\right]\ dx,
\end{equation}
\begin{equation}\nonumber
V_{11}=-\varepsilon\bar{m}\int_{\mathbb{T}^{d}}(\mu^{1}-\mu^{2})\sum_{i=1}^{d}\left[
\Theta_{p_{i}q}(t,x,\mu^{1},Dw^{1})\frac{\partial\mu^{1}}{\partial x_{i}}
-\Theta_{p_{i}q}(t,x,\mu^{2},Dw^{1})\frac{\partial\mu^{1}}{\partial x_{i}}
\right]\ dx,
\end{equation}
\begin{equation}\nonumber
V_{12}=-\varepsilon\bar{m}\int_{\mathbb{T}^{d}}(\mu^{1}-\mu^{2})\sum_{i=1}^{d}
\Theta_{p_{i}q}(t,x,\mu^{2},Dw^{2})
\left(\frac{\partial(\mu^{1}-\mu^{2})}{\partial x_{i}}\right)
\ dx,
\end{equation}
\begin{multline}\nonumber
V_{13}=-\varepsilon\bar{m}\int_{\mathbb{T}^{d}}(\mu^{1}-\mu^{2})\sum_{i=1}^{d}\sum_{j=1}^{d}\Bigg[
\Theta_{p_{i}p_{j}}(t,x,\mu^{1},Dw^{1})\frac{\partial^{2}w^{1}}{\partial x_{i}\partial x_{j}}
\\
-\Theta_{p_{i}p_{j}}(t,x,\mu^{2},Dw^{2})\frac{\partial^{2}w^{1}}{\partial x_{i}\partial x_{j}}
\Bigg]\ dx,
\end{multline}
and finally,
\begin{equation}\nonumber
V_{14}=-\varepsilon\bar{m}\int_{\mathbb{T}^{d}}(\mu^{1}-\mu^{2})\sum_{i=1}^{d}\sum_{j=1}^{d}\left[
\Theta_{p_{i}p_{j}}(t,x,\mu^{2},Dw^{2})
\left(\frac{\partial^{2}(w^{1}-w^{2})}{\partial x_{i}\partial x_{j}}\right)
\right]\ dx.
\end{equation}

We integrate $V_{1}$ by parts:
\begin{equation}\label{V1ByParts}
V_{1}=-\int_{\mathbb{T}^{d}}\left|\nabla\left(\mu^{1}-\mu^{2}\right)\right|^{2}\ dx.
\end{equation}
We also integrate each of $V_{2},$ $V_{7},$ and $V_{12}$ by parts:
\begin{equation}\nonumber
V_{2}=\frac{\varepsilon}{2}\int_{\mathbb{T}^{d}}(\mu^{1}-\mu^{2})^{2}
\mathrm{div}\left(\Theta_{p}(t,x,\mu^{1},Dw^{1})\right)\ dx,
\end{equation}
\begin{equation}\nonumber
V_{7}=\frac{\varepsilon}{2}\sum_{i=1}^{d}\int_{\mathbb{T}^{d}}(\mu^{1}-\mu^{2})^{2}\frac{\partial}{\partial x_{i}}
\left((\mu^{2})\Theta_{p_{i}q}(t,x,\mu^{1},Dw^{1})\right)\ dx,
\end{equation}
\begin{equation}\nonumber
V_{12}=\frac{\varepsilon\bar{m}}{2}\sum_{i=1}^{d}\int_{\mathbb{T}^{d}}(\mu^{1}-\mu^{2})^{2}
\frac{\partial}{\partial x_{i}}\left(\Theta_{p_{i}q}(t,x,\mu^{1},Dw^{1})\right)\ dx.
\end{equation}
These terms, and also $V_{3},$ are then bounded in terms of the energy, using the bound on the solutions.
We distinguish between two kinds of bounds, though; there exists a nondecreasing function $\mathcal{G}_{1}$
which may be taken so as to converge to zero if $K$ vanishes, such that
\begin{equation}\label{V-7}
V_{7}\leq \varepsilon \mathcal{G}_{1}(K) E_{\mu}.
\end{equation}
The fact that $\mathcal{G}_{1}$ can be taken to vanish with $K$ is because of the presence of the linear factor $\mu^{2}$
in $V_{7}.$  Also note that the regularity requirement $s>2+\frac{d}{2}$ allowed us here to estimate $\mu^{2}$ in $L^{\infty};$
the requirement comes into play in the same way several times throughout the rest of the argument.
On the other hand, we have a nondecreasing function $\mathcal{G}_{2}$ such that
\begin{equation}\label{V-2-3-and-12}
V_{2}+V_{3}+V_{12}\leq \varepsilon \mathcal{G}_{2}(K) E_{\mu}.
\end{equation}

For most of the remaining terms, we estimate them using the Lipschitz properties of $\Theta_{p}$ and its derivatives;
these terms satisfy
\begin{equation}\label{V-many}
V_{4}+V_{5}+V_{6}+V_{8}+V_{11}+V_{13}
\leq \varepsilon \mathcal{G}_{1}(K)(E_{\mu}+E_{\mu}^{1/2}E_{w}^{1/2}),
\end{equation}
where $\mathcal{G}_{1}(K)$ is as before, and where its
vanishing property is again because of the presence of linear factors such as $\mu^{2}$ in the terms.
Another term relies on the Lipschitz estimate for $\Theta_{p},$ but does not have such a linear factor of the unknowns present;
for this, we again have the existence of $\mathcal{G}_{2}(K)$ such that
\begin{equation}\label{V-10}
V_{10}\leq \varepsilon \mathcal{G}_{2}(K)(E_{\mu}+E_{\mu}^{1/2}E_{w}^{1/2}).
\end{equation}

This leaves two more terms to deal with, $V_{9}$ and $V_{14}.$  We will use Young's inequality for these, and 
later bound them by a contribution from $E_{w}.$
For $V_{9},$ we begin by bounding $\Theta_{p_{i}p_{j}}$ and $\mu^{2}$ with $\mathcal{G}_{1}(K):$
\begin{equation}\nonumber
V_{9}\leq\varepsilon \mathcal{G}_{1}(K)\sum_{i=1}^{d}\sum_{j=1}^{d}\int_{\mathbb{T}^{d}}(\mu^{1}-\mu^{2})
\frac{\partial^{2}(w^{1}-w^{2})}{\partial_{x_{i}}\partial_{x_{j}}}\ dx.
\end{equation}
We then apply Young's inequality, with parameter $4\varepsilon \mathcal{G}_{1}(K):$
\begin{multline}\nonumber
V_{9}\leq\varepsilon^{2} \mathcal{G}_{1}(K)\int_{\mathbb{T}^{d}}(\mu^{1}-\mu^{2})^{2}\ dx
+\frac{1}{8}\sum_{j=1}^{d}\int_{\mathbb{T}^{d}}(\partial_{x_{j}}(Dw^{1}-Dw^{2}))^{2}\ dx
\\
\leq \varepsilon^{2}\mathcal{G}_{1}(K)E_{\mu}
+\frac{1}{8}\sum_{j=1}^{d}\int_{\mathbb{T}^{d}}(\partial_{x_{j}}(Dw^{1}-Dw^{2}))^{2}\ dx.
\end{multline}
The remaining term, $V_{14},$ is entirely similar, except that we use $\mathcal{G}_{2}$ instead of $\mathcal{G}_{1}:$
\begin{equation}\nonumber
V_{14}\leq 
\varepsilon^{2}\mathcal{G}_{2}(K)E_{\mu}
+\frac{1}{8}\sum_{j=1}^{d}\int_{\mathbb{T}^{d}}(\partial_{x_{j}}(Dw^{1}-Dw^{2}))^{2}\ dx.
\end{equation}
Adding these results for $V_{9}$ and $V_{14},$ we have
\begin{equation}\label{V-9-and-14}
V_{9}+V_{14}\leq \varepsilon(\mathcal{G}_{1}(K)+\mathcal{G}_{2}(K))E_{\mu}
+\frac{1}{4}\sum_{j=1}^{d}\int_{\mathbb{T}^{d}}(\partial_{x_{j}}(Dw^{1}-Dw^{2}))^{2}\ dx.
\end{equation}

Summarizing the terms we have estimated so far, leaving $V_{1}$ out for the moment, 
by adding the bounds \eqref{V-7}, \eqref{V-2-3-and-12}, \eqref{V-many}, \eqref{V-10}, and
\eqref{V-9-and-14}, 
we have concluded the 
following:
\begin{equation}\label{sumOfMostVTerms}
\sum_{\ell=2}^{14}V_{\ell}\leq \varepsilon(\mathcal{G}_{1}(K)+\mathcal{G}_{2}(K))(E_{w}+E_{\mu}) + 
\frac{1}{4}\sum_{j=1}^{d}\int_{\mathbb{T}^{d}}(\partial_{x_{j}}^{2}(Dw^{1}-Dw^{2}))^{2}\ dx.
\end{equation}

We turn our attention to $E_{w},$ and we write 
$E_{w}=\sum_{j=1}^{d}E^{j}_{w},$ 
with
\begin{equation}\nonumber
\frac{dE^{j}_{w}}{dt}=\int_{\mathbb{T}^{d}}(\partial_{x_{j}}w^{1}-\partial_{x_{j}}w^{2})
\partial_{t}(\partial_{x_{j}}w^{1}-\partial_{x_{j}}w^{2})\ dx.
\end{equation}
We then add and subtract to make the following decomposition:
\begin{equation}\nonumber
\frac{dE^{j}_{w}}{dt}=\sum_{\ell=1}^{6}W^{j}_{\ell},
\end{equation}
with
\begin{equation}\nonumber
W^{j}_{1}=-\int_{\mathbb{T}^{d}}(\partial_{x_{j}}w^{1}-\partial_{x_{j}}w^{2})
\Delta(\partial_{x_{j}}w^{1}-\partial_{x_{j}}w^{2})\ dx,
\end{equation}
\begin{equation}\nonumber
W^{j}_{2}=-\varepsilon\int_{\mathbb{T}^{d}}(\partial_{x_{j}}w^{1}-\partial_{x_{j}}w_{2})
\left(\Theta_{x_{j}}(t,x,\mu^{1},Dw^{1})-\Theta_{x_{j}}(t,x,\mu^{2},Dw^{2})\right)\ dx,
\end{equation}
\begin{equation}\nonumber
W^{j}_{3}=-\varepsilon\int_{\mathbb{T}^{d}}(\partial_{x_{j}}w^{1}-\partial_{x_{j}}w^{2})
\left(\Theta_{q}(t,x,\mu^{1},Dw^{1})\mu^{1}_{x_{j}}-\Theta_{q}(t,x,\mu^{2},Dw^{2})\mu^{1}_{x_{j}}\right)\ dx,
\end{equation}
\begin{equation}\nonumber
W^{j}_{4}=-\varepsilon\int_{\mathbb{T}^{d}}(\partial_{x_{j}}w^{1}-\partial_{x_{j}}w^{2})
\left(\Theta_{p}(t,x,\mu^{2},Dw^{2})\mu^{1}_{x_{j}}-\Theta_{q}(t,x,\mu^{2},Dw^{2})\mu^{2}_{x_{j}}\right)\ dx,
\end{equation}
\begin{equation}\nonumber
W^{j}_{5}=-\varepsilon\sum_{i=1}^{d}\int_{\mathbb{T}^{d}}(\partial_{x_{j}}w^{1}-\partial_{x_{j}}w^{2})
\left(\Theta_{p_{i}}(t,x,\mu^{1},Dw^{1})\partial^{2}_{x_{i}x_{j}}w^{1}
-\Theta_{p_{i}}(t,x,\mu^{2},Dw^{2})\partial^{2}_{x_{i}x_{j}}w^{1}\right)\ dx,
\end{equation}
\begin{equation}\nonumber
W^{j}_{6}=-\varepsilon\sum_{i=1}^{d}\int_{\mathbb{T}^{d}}(\partial_{x_{j}}w^{1}-\partial_{x_{j}}w^{2})
\left(\Theta_{p_{i}}(t,x,\mu^{2},Dw^{2})\partial^{2}_{x_{i}x_{j}}w^{1}
-\Theta_{p_{i}}(t,x,\mu^{2},Dw^{2})\partial^{2}_{x_{i}x_{j}}w^{2}\right)\ dx.
\end{equation}

We integrate $W^{j}_{1}$ by parts:
\begin{equation}\nonumber
W^{j}_{1}=\int_{\mathbb{T}^{d}}\left|\nabla\partial_{x_{j}}\left(w^{1}-w^{2}\right)\right|^{2}\ dx.
\end{equation}

We may also integrate $W_{6}^{j}$ by parts, to find the following:
\begin{equation}\nonumber
W_{6}^{j}=\frac{\varepsilon}{2}\sum_{i=1}^{d}\int_{\mathbb{T}^{d}}
(\partial_{x_{j}}w^{1}-\partial_{x_{j}}w^{2})^{2}\partial_{x_{i}}\left(\Theta_{p_{i}}(t,x,\mu^{2},Dw^{2})\right)\ dx.
\end{equation}

With $\mathcal{G}_{1}$ and $\mathcal{G}_{2}$ as before, we may estimate many of the terms forthwith:
\begin{equation}\label{W-2-and-6}
W_{2}^{j}+W_{6}^{j}
\leq
\varepsilon \mathcal{G}_{2}(K)(E_{w}+E_{w}^{1/2}E_{\mu}^{1/2}),
\end{equation}
\begin{equation}\label{W-3-and-5}
W_{3}^{j}+W_{5}^{j}
\leq \varepsilon \mathcal{G}_{1}(K)(E_{w}+E_{w}^{1/2}E_{\mu}^{1/2}).
\end{equation}

We are left with one term to deal with more carefully, $W_{4}^{j}.$    We first bound $\Theta_{p}(t,x,\mu^{2},Dw^{2})$
in $L^{\infty}$ by $\mathcal{G}_{2}(K),$ finding
\begin{equation}\nonumber
W_{4}^{j}\leq \varepsilon \mathcal{G}_{2}(K)\int_{\mathbb{T}^{d}}(\partial_{x_{j}}w^{1}-\partial_{x_{j}}w^{2})
\partial_{x_{j}}(\mu^{1}-\mu^{2})\ dx.
\end{equation}
We next use Young's inequality, with $2\varepsilon \mathcal{G}_{2}(K)$ as the parameter, finding
\begin{equation}\nonumber
W_{4}^{j}\leq
\varepsilon^{2} \mathcal{G}_{2}(K)E_{w} + \frac{1}{4}\int_{\mathbb{T}^{d}}(\partial_{x_{j}}\mu^{1}-\partial_{x_{j}}\mu^{2})^{2}\ dx.
\end{equation}

Adding contributions from \eqref{W-2-and-6} and \eqref{W-3-and-5} to this,
we have
\begin{equation}\label{sumOfMostWTerms+V1}
\sum_{\ell=2}^{6}\sum_{j=1}^{d}W_{\ell}^{j}
\leq \varepsilon(\mathcal{G}_{1}(K)+\mathcal{G}_{2}(K))(E_{w}+E_{\mu})
+\frac{1}{4}\int_{\mathbb{T}^{d}}|D\mu^{1}-D\mu^{2}|^{2}\ dx.
\end{equation}

We are now ready to integrate with respect to time.  Integrating $\frac{dE_{\mu}}{dt}$ over the interval $[0,t],$ we find
\begin{equation}\nonumber
E_{\mu}(t)=E_{\mu}(0)+\int_{0}^{t}\sum_{\ell=1}^{14}V_{\ell}.
\end{equation}
We then use the bound \eqref{sumOfMostVTerms}, finding
\begin{multline}\nonumber
E_{\mu}(t)\leq E_{\mu}(0)+\varepsilon T (\mathcal{G}_{1}(K)+\mathcal{G}_{2}(K))(E_{\mu}+E_{w})
\\
+\int_{0}^{t}\left[V_{1}+\frac{1}{4}\sum_{j=1}^{d}\int_{\mathbb{T}^{d}}(\partial_{x_{j}}(Dw^{1}-Dw^{2}))^{2}\ dx\right]\ d\tau.
\end{multline}
We next integrate $\frac{dE_{w}}{dt}$ over the interval $[t,T],$ finding
\begin{equation}\nonumber
E_{w}(t)=E_{w}(T)-\int_{t}^{T}\sum_{j=1}^{d}\sum_{\ell=1}^{6}W^{j}_{\ell}\ d\tau.
\end{equation}
We use the estimate \eqref{sumOfMostWTerms+V1}, then, as follows:
\begin{multline}\nonumber
E_{w}(t)\leq E_{w}(T)
+\varepsilon T(\mathcal{G}_{1}(K)+\mathcal{G}_{2}(K))(E_{w}+E_{\mu})
-\sum_{j=1}^{d}\int_{t}^{T}W_{1}^{j}\ d\tau
\\
+\frac{1}{4}\int_{t}^{T}\int_{\mathbb{T}^{d}}|D\mu^{1}-D\mu^{2}|^{2}\ dxd\tau.
\end{multline}
We apply the definitions of $V_{1}$ and $W^{j}_{1},$ and summarize what we have found thus far:
\begin{multline}\label{almostDoneUniqueness}
E_{w}(t)+E_{\mu}(t)+\int_{0}^{t}\|D\mu^{1}-D\mu^{2}\|_{0}^{2}\ d\tau+\int_{t}^{T}\|D^{2}w^{1}-D^{2}w^{2}\|_{0}^{2}\ d\tau
\\
\leq
E_{w}(T)+E_{\mu}(0)+\varepsilon T(\mathcal{G}_{1}(K)+\mathcal{G}_{2}(K))(E_{w}(t)+E_{\mu}(t))
\\
+\frac{1}{4}\int_{t}^{T}\|D\mu^{1}-D\mu^{2}\|_{0}^{2}\ d\tau
+\frac{1}{4}\int_{0}^{t}\|D^{2}w^{1}-D^{2}w^{2}\|_{0}^{2}\ d\tau.
\end{multline}
On the right-hand side, we bound the integrals with integrals over the entire time interval:
\begin{multline}\label{almostDoneUniqueness2}
E_{w}(t)+E_{\mu}(t)+\int_{0}^{t}\|D\mu^{1}-D\mu^{2}\|_{0}^{2}\ d\tau+\int_{t}^{T}\|D^{2}w^{1}-D^{2}w^{2}\|_{0}^{2}\ d\tau
\\
\leq
E_{w}(T)+E_{\mu}(0)+\varepsilon T(\mathcal{G}_{1}(K)+\mathcal{G}_{2}(K))(E_{w}(t)+E_{\mu}(t))
\\
+\frac{1}{4}\int_{0}^{T}\|D\mu^{1}-D\mu^{2}\|_{0}^{2}\ d\tau
+\frac{1}{4}\int_{0}^{T}\|D^{2}w^{1}-D^{2}w^{2}\|_{0}^{2}\ d\tau.
\end{multline}
Isolating the first integral on the left-hand side of \eqref{almostDoneUniqueness2}, we conclude from this that
\begin{multline}\label{workaroundParabolic1}
\int_{0}^{t}\|D\mu^{1}-D\mu^{2}\|_{0}^{2}\ d\tau
\leq
E_{w}(T)+E_{\mu}(0)+\varepsilon T(\mathcal{G}_{1}(K)+\mathcal{G}_{2}(K))(E_{w}(t)+E_{\mu}(t))
\\
+\frac{1}{4}\int_{0}^{T}\|D\mu^{1}-D\mu^{2}\|_{0}^{2}\ d\tau
+\frac{1}{4}\int_{0}^{T}\|D^{2}w^{1}-D^{2}w^{2}\|_{0}^{2}\ d\tau.
\end{multline}
We take the supremum over time in both sides of \eqref{workaroundParabolic1}, finding
\begin{multline}\label{workaroundParabolic2}
\int_{0}^{T}\|D\mu^{1}-D\mu^{2}\|_{0}^{2}\ d\tau
\leq
E_{w}(T)+E_{\mu}(0)
\\
+\varepsilon T(\mathcal{G}_{1}(K)+\mathcal{G}_{2}(K))\left(\sup_{t\in[0,T]}(E_{w}(t)+E_{\mu}(t))\right)
\\
+\frac{1}{4}\int_{0}^{T}\|D\mu^{1}-D\mu^{2}\|_{0}^{2}\ d\tau
+\frac{1}{4}\int_{0}^{T}\|D^{2}w^{1}-D^{2}w^{2}\|_{0}^{2}\ d\tau.
\end{multline}
We similarly treat the other integral on the left-hand side of \eqref{almostDoneUniqueness2}, finding
\begin{multline}\label{workaroundParabolic3}
\int_{0}^{T}\|D^{2}w^{1}-D^{2}w^{2}\|_{0}^{2}\ d\tau
\leq
E_{w}(T)+E_{\mu}(0)
\\
+\varepsilon T(\mathcal{G}_{1}(K)+\mathcal{G}_{2}(K))\left(\sup_{t\in[0,T]}(E_{w}(t)+E_{\mu}(t))\right)
\\
+\frac{1}{4}\int_{0}^{T}\|D\mu^{1}-D\mu^{2}\|_{0}^{2}\ d\tau
+\frac{1}{4}\int_{0}^{T}\|D^{2}w^{1}-D^{2}w^{2}\|_{0}^{2}\ d\tau.
\end{multline}
We add \eqref{workaroundParabolic2} and \eqref{workaroundParabolic3} and rearrange the integrals, finding
\begin{multline}\nonumber
\int_{0}^{T}\|D\mu^{1}-D\mu^{2}\|_{0}^{2}\ d\tau
+\int_{0}^{T}\|D^{2}w^{1}-D^{2}w^{2}\|_{0}^{2}\ d\tau
\\
\leq 2E_{w}(T)+2E_{\mu}(0)
+\varepsilon T(\mathcal{G}_{1}(K)+\mathcal{G}_{2}(K))\left(\sup_{t\in[0,T]}(E_{w}(t)+E_{\mu}(t))\right).
\end{multline}
Using this with \eqref{almostDoneUniqueness2}, we have
\begin{multline}\label{finalUniqueness}
\sup_{t\in[0,T]}(E_{w}(t)+E_{\mu}(t))\\
\leq \frac{3}{2}E_{w}(T)+\frac{3}{2}E_{\mu}(0)
+\varepsilon T(\mathcal{G}_{1}(K)+\mathcal{G}_{2}(K))\left(\sup_{t\in[0,T]}(E_{w}(t)+E_{\mu}(t))\right).
\end{multline}

If $\varepsilon T (\mathcal{G}_{1}(K)+\mathcal{G}_{2}(K))<1,$ then we have established (local) uniqueness of solutions.
For then, if $w^{1}(T,\cdot)=w^{2}(T,\cdot)$ and if $\mu^{1}(0,\cdot)=\mu^{2}(0,\cdot),$ then
$E_{w}(T)=E_{\mu}(0)=0,$ and \eqref{finalUniqueness} implies
\begin{equation}\nonumber
\sup_{t\in[0,T]}(E_{w}(t)+E_{\mu}(t)) \leq 0.
\end{equation}
Thus $w^{1}=w^{2}$ and $\mu^{1}=\mu^{2}.$ 
This completes the proof.
\end{proof}

\begin{remark} \label{uniquenessRemark}
As we did in Remark \ref{existenceRemark} 
following the statement of our smallness condition for use in Theorem \ref{existenceTheorem},
we remark now on the smallness condition for Theorem \ref{uniquenessTheorem}.  
The function $\mathcal{G}(K)$ mentioned in the statement of Theorem \ref{uniquenessTheorem}
is equal to $\mathcal{G}_{1}(K)+\mathcal{G}_{2}(K).$  Clearly the smallness condition may be satisfied by taking
either $\varepsilon$ or $T$ sufficiently small, and we remarked on this similarly in Remark \ref{existenceRemark} 
for existence.
It is also possible, for some Hamiltonians, that uniqueness follows in the case of small data.  The function
$\mathcal{G}_{1}(K)$ does go to zero as $K$ goes to zero, but whether $\mathcal{G}_{2}(K)$ does as well depends on the Hamiltonian.
For example, for the Hamiltonian $\mathcal{H}=m|Du|^{2},$ we have $\mathcal{H}_{p}=2mDu,$ and the Lipschitz constant
for $\mathcal{H}_{p}$ then does not become small when the solution is small.  For other choices
of Hamiltonian, however, the Lipschitz constant may be small when the solution is, and for such Hamiltonians, 
one would gain from this theorem uniqueness of small solutions for larger values of $\varepsilon$ and $T.$
\end{remark}

\section{Extensions and Discussion}\label{discussionSection}

Having proved our main existence and uniqueness theorems, we now close with some  remarks.  

\subsection{Extension to more general payoff conditions}\label{payoffExtension}

We have carried out the above analysis with a prescribed terminal condition, i.e. with boundary conditions 
\eqref{planningBC}.
As discussed in \cite{cirantNew}, there seem two possible ways to prove both existence and uniqueness for the more general 
conditions \eqref{payoffBC}.  One of these is the assumption pursued in \cite{cirantNew}, in which the payoff function 
$G$ is regularizing, and the other option, mentioned briefly in \cite{cirantNew}, is to make some further smallness assumption.
We state now and give a sketch of the proof (as most of the proof is the same as for Theorem \ref{existenceTheorem} above) of 
existence of solutions under such an additional smallness hypothesis.

To state the additional smallness condition, it will help if we introduce some further notation.
Loosely speaking, we let $\varepsilon_{1}$ be an upper bound for $\Theta_{p_{i}p_{j}};$ this is made precise in 
\eqref{definitionOfVarepsilon1} just below.  
In the following theorem, in which we have a non-regularizing payoff function $G,$ we will assume that the product
$\varepsilon\varepsilon_{1}$ is sufficiently small.  Clearly this could be achieved for Hamiltonians with fairly general forms by 
simply having $\varepsilon$ small.  Alternatively, for some Hamiltonians such as $m^{p}|Du|^{4},$ second derivatives with respect to $p$
can be small if the data for $u$ is small; i.e. for such Hamiltonians, $\varepsilon_{1}$ may itself be taken small.

\begin{theorem} Assume that the payoff function $G$ is a continuous mapping from $\mathbb{T}^{d}\times H^{s}(\mathbb{T}^{d})$ to
$H^{s}(\mathbb{T}^{d}).$
Let $T>0$ and $\varepsilon>0$ be given.  
Let $s\geq \left\lceil\frac{d+5}{2}\right\rceil$ and let $\mu_{0}\in H^{s}(\mathbb{T}^{d})$ be such that $\bar{m}+\mu_{0}$ is a 
probability measure.  Let $u_{T}\in H^{s}(\mathbb{T}^{d})$ be given.  Assume that the condition
{\bf(H1)} is satisfied.   
Let $\varepsilon_{1}>0$ satisfy
\begin{equation}\label{definitionOfVarepsilon1}
\sup_{i,j}\sup_{t,x}|\Theta_{p_{i}p_{j}}(t,x,\mu_{0},DG(x,\mu_{0}+\bar{m})|<\frac{\varepsilon_{1}}{2}.
\end{equation}
If $T$ is sufficiently small and if
the product $\varepsilon\varepsilon_{1}$ is sufficiently small,
then there exists $\mu\in L^{\infty}([0,T];H^{s})\cap L^{2}([0,T];H^{s+1})$ and
there exists $u\in L^{\infty}([0,T];H^{s})\cap L^{2}([0,T];H^{s+1})$ such that $\bar{m}+\mu$ is a probability measure for all
$t\in[0,T],$ and such that $(u,\bar{m}+\mu)$ is a classical solution of 
\eqref{uEquation}, \eqref{mEquation}, \eqref{payoffBC}.
Furthermore, for all $s'\in[0,s),$ we have $\mu\in C([0,T];H^{s'})$ and $u\in C([0,T];H^{s'}).$
\end{theorem}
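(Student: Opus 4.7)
The plan is to follow the iteration scheme of Section \ref{formulation} and the estimates of Section \ref{uniformEstimates} with two modifications. First, the terminal condition \eqref{uN+1BC} is replaced by
\begin{equation}\nonumber
w^{n+1}(T,\cdot)=\mathbb{P}_{\delta}PG(\cdot,\bar{m}+\mu^{n}(T,\cdot)),
\end{equation}
where the previous iterate $\mu^{n}$ at time $T$ supplies the argument of $G$; since $G:\mathbb{T}^{d}\times H^{s}\to H^{s}$ is continuous, this terminal datum is well-defined and lies in $H^{s}$. Second, to be consistent with $G(\cdot,\bar{m}+\mu)\in H^{s}$, I would track $\mu^{n}\in L^{\infty}H^{s}\cap L^{2}H^{s+1}$ (one derivative higher than in the planning case) while keeping $w^{n}$ at its previous regularity. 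Accordingly, the energy estimate for $\partial^{\alpha}\mu^{n+1}$ would be carried out at order $|\alpha|\leq s$, and the estimate for $\partial^{\alpha}\partial_{x_{j}}w^{n+1}$ at order $|\alpha|\leq s-1$ as before.

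Most of the integrals $I$--$III$, $IV_{B}$, $V$--$VII$ translate verbatim, yielding an analogue of \eqref{iterativeBound} whose terminal-data contribution is now $\|D\mathbb{P}_{\delta}G(\cdot,\bar{m}+\mu^{n}(T,\cdot))\|_{s-1}^{2}$. To keep this contribution uniformly bounded throughout the induction, I would use the Duhamel formula \eqref{muN+1ByDuhamel} and the smallness of $T$ to show that $\mu^{n}(T,\cdot)$ remains in an $H^{s}$-neighborhood of $\mu_{0}$ along the iteration, so that continuity of $G$ controls this term in terms of $\|G(\cdot,\bar{m}+\mu_{0})\|_{s}$ plus a quantity that is small with $T$. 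The same smallness of $T$, together with continuity of $\Theta_{p_{i}p_{j}}$ and the hypothesis \eqref{definitionOfVarepsilon1}, keeps $(\mu^{n},Dw^{n})$ in the $L^{\infty}$-neighborhood of the reference state $(\mu_{0},DG(\cdot,\bar{m}+\mu_{0}))$ on which $|\Theta_{p_{i}p_{j}}|<\varepsilon_{1}$, so that the bound $|\Theta_{p_{i}p_{j}}(t,x,\mu^{n},Dw^{n})|_{\infty}\leq\varepsilon_{1}$ is available in the estimates.

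The main obstacle is the analogue of the leading-order piece $IV_{A}$ at $|\alpha|=s$, namely
\begin{equation}\nonumber
-\varepsilon\int_{0}^{t}\int_{\mathbb{T}^{d}}(\partial^{\alpha}\mu^{n+1})(\mu^{n}+\bar{m})\sum_{i,j}\Theta_{p_{i}p_{j}}(\partial^{\alpha}\partial_{x_{i}x_{j}}^{2}w^{n})\ dx d\tau,
\end{equation}
in which $\partial^{\alpha}\partial_{x_{i}x_{j}}^{2}w^{n}$ carries $s+2$ spatial derivatives on $w^{n}$, exceeding the parabolic regularity $L^{2}H^{s+1}$; thus the method of \eqref{IVAConclusion} fails. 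I would handle this by integrating by parts in $x_{j}$, recasting the integral as
\begin{equation}\nonumber
\varepsilon\int_{0}^{t}\int_{\mathbb{T}^{d}}\partial_{x_{j}}\bigl[(\partial^{\alpha}\mu^{n+1})(\mu^{n}+\bar{m})\Theta_{p_{i}p_{j}}\bigr](\partial^{\alpha}\partial_{x_{i}}w^{n})\ dx d\tau.
\end{equation}
The dominant piece, in which $\partial_{x_{j}}$ lands on $\partial^{\alpha}\mu^{n+1}$, contains $s+1$ derivatives on each of $\mu^{n+1}$ and $w^{n}$, which is exactly what is available; Young's inequality with parameter tied to the parabolic dissipation of $\mu^{n+1}$, combined with the bound $|\Theta_{p_{i}p_{j}}|_{\infty}\leq\varepsilon_{1}$, absorbs it at the cost of a remainder of size $c\varepsilon^{2}\varepsilon_{1}^{2}(1+M_{n})N_{n}$, which is precisely where the smallness of the product $\varepsilon\varepsilon_{1}$ enters. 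The remaining commutator-type pieces from the IBP involve at most $s+1$ derivatives on $w^{n}$ and are controlled by Lemma \ref{leadingTermsSobolev} and Lemma \ref{boundsOnTheta} as in the existence proof. Under the joint smallness of $T$ and of $\varepsilon\varepsilon_{1}$---playing the role of the condition {\bf(H2)}---the resulting iterative bound closes, and the induction, Arzela--Ascoli compactness, weak limit extraction in $L^{2}H^{s+1}$, and recovery of $u$ from $(w,\mu)$ proceed exactly as in the proof of Theorem \ref{existenceTheorem}.
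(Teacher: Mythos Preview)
Your proposal is correct and follows essentially the same route as the paper: raise $\mu$ by one order of regularity, identify the analogue of $IV_{A}$ at $|\alpha|=s$ as the only genuinely new difficulty, integrate by parts in $x_{j}$ to rebalance so that both factors carry $s+1$ derivatives, and exploit the pointwise bound $|\Theta_{p_{i}p_{j}}|\leq\varepsilon_{1}$ (maintained via the inductive hypothesis and smallness of $T$) together with parabolic dissipation. The only cosmetic difference is that the paper applies Young's inequality with parameter $1$ in $\widetilde{IV}_{A,i}$, so the $\partial^{\alpha}D\mu^{n+1}$ piece carries a factor $\varepsilon\varepsilon_{1}$ and is absorbed directly by the dissipation when $\varepsilon\varepsilon_{1}$ is small, whereas you choose the parameter so that the absorption is automatic and the smallness appears instead in an $O(\varepsilon^{2}\varepsilon_{1}^{2})$ remainder; both close equivalently.
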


\begin{proof}
We follow the proof of Theorem \ref{existenceTheorem} with some changes.

Considering the general payoff problem \eqref{payoffBC} instead of the prescribed problem \eqref{planningBC}, 
we replace \eqref{uN+1BC} with the following:
\begin{equation}\nonumber
w^{n+1}(T,x)=\mathbb{P}_{\delta} PG(x,m^{n}(T,x)).
\end{equation}
Of course, there is the related change in the Duhamel formula as well:
\begin{multline}\nonumber
w^{n+1}(t,\cdot)=e^{\Delta(T-t)}\mathbb{P}_{\delta}PG(\cdot,m^{n}(T,\cdot))
\\
-\varepsilon P\int_{t}^{T}e^{\Delta(s-t)}
\Theta(s,\cdot,\mu^{n}(s,\cdot),Dw^{n}(s,\cdot))\ ds.
\end{multline}

Taking $u\in H^{s}$ and $m\in H^{s},$ there is only one problematic term in the analysis, and it is the analogue of the
term $IV_{A}$ in the proof of Theorem \ref{existenceTheorem}.  We define the new norms as
\begin{equation}\nonumber
\widetilde{M}_{n}=\sup_{t\in[0,T]}\left(\|Dw^{n}\|_{s-1}^{2}+\|\mu^{n}\|_{s}^{2}\right),
\end{equation}
\begin{equation}\nonumber
\widetilde{N}_{n}=\sum_{1\leq|\alpha|\leq s}\int_{0}^{T}\left[
\|\partial^{\alpha}Dw^{n}\|_{0}^{2}+\|\partial^{\alpha}D\mu^{n}\|_{0}^{2}\right]\ d\tau,
\end{equation}
in contrast to \eqref{mndef} and \eqref{nndef}.
As in the proof of Theorem \ref{existenceTheorem}, the proof relies on induction, with the inductive hypothesis
\begin{equation}\nonumber
\widetilde{M}_{n}+4\widetilde{N}_{n}\leq 2\mathcal{S},
\end{equation}
where $\mathcal{S}>0$ is taken to satisfy
\begin{equation}\nonumber
4\|\mu_{0}\|_{s}^{2}+4\|Dw_{T}\|_{s-1}^{2}\leq \mathcal{S}.
\end{equation}
Considering \eqref{definitionOfVarepsilon1}, and in light of the inductive hypothesis, we can take $T>0$ to be sufficiently small 
(such that this smallness requirement will turn out to be independent of $n$) such that
\begin{equation}\label{newSmall}
\sup_{i,j}\sup_{t,x}|\Theta_{p_{i}p_{j}}(t,x,\mu^{n},Dw^{n})|<\varepsilon_{1}.
\end{equation}
To be clear, this is because the time derivative of $\mu^{n}$ is bounded, so that $\mu^{n}$ and thus 
$\Theta_{p_{i}p_{j}}$ will not vary too much over a bounded time interval.

All terms except the analogue of $IV_{A}$ are estimated
the same as previously; we call this analogue $\widetilde{IV}_{A}.$
The formula for $\widetilde{IV}_{A}$ is actually just the same as the formula for $IV_{A}$, but with the understanding
that the order of $\alpha$ may now be higher (that is, previously the order of $\alpha$ was at most $s-1$ while it now may be of order up to
$s$).  We have
\begin{multline}\nonumber
\widetilde{IV}_{A}=
\\
-\varepsilon\int_{0}^{t}\int_{\mathbb{T}^{d}}
\left(\partial^{\alpha}\mu^{n+1}\right)
(\mu^{n}+\bar{m})\sum_{i=1}^{d}\sum_{j=1}^{d}
\left[\left(\Theta_{p_{i}p_{j}}(\tau,x,\mu^{n},Dw^{n})\right)
\left(\partial^{\alpha}\partial_{x_{i}x_{j}}^{2}w^{n}\right)\right]\ dx d\tau.
\end{multline}
We integrate by parts, and decompose the result as $\widetilde{IV}_{A}=\widetilde{IV}_{A,i}+\widetilde{IV}_{A,ii},$
with
\begin{equation}\nonumber
\widetilde{IV}_{A,i}=\varepsilon\int_{0}^{t}\int_{\mathbb{T}^{d}}\sum_{i=1}^{d}\sum_{j=1}^{d}(\partial^{\alpha}\partial_{x_{i}}\mu^{n+1})
(\mu^{n}+\bar{m})(\Theta_{p_{i}p_{j}})(\partial^{\alpha}\partial_{x_{j}}w^{n})\ dxd\tau,
\end{equation}
\begin{equation}\nonumber
\widetilde{IV}_{A,ii}=\varepsilon\int_{0}^{t}\int_{\mathbb{T}^{d}}(\partial^{\alpha}\mu^{n+1})\sum_{i=1}^{d}\sum_{j=1}^{d}
\left(\partial_{x_{i}}\left((\mu^{n}+\bar{m})\Theta_{p_{i}p_{j}}\right)\right)(\partial^{\alpha}\partial_{x_{j}}w^{n})\ dxd\tau.
\end{equation}
The important point here is that the term $\widetilde{IV}_{A,i}$ involves up to $s+1$ derivatives of $\mu^{n+1}$ as well as 
up to $s+1$ derivatives of $w^{n}.$  Therefore neither of these factors are controlled by $\widetilde{M}_{n+1}$ or $\widetilde{M}_{n},$ and
both must be contolled instead by $\widetilde{N}_{n+1}$ or $\widetilde{N}_{n}.$  In the proof of Theorem \ref{existenceTheorem}, we did
not have a product of two such terms which needed to be controlled by $N_{n+1}$ or $N_{n}.$  

Of course, in addition to $\widetilde{IV}_{A},$ we also have terms $\widetilde{I},$ $\widetilde{II},$ $\widetilde{III},$ and $\widetilde{IV}_{B};$ 
all of these, as well as $\widetilde{IV}_{A,ii},$ can be estimated just as in the proof of Theorem \ref{existenceTheorem}, yielding the
following estimate:
\begin{multline}\label{allButIVai}
\frac{1}{2}\int_{\mathbb{T}^{d}}(\partial^{\alpha}\mu^{n+1}(t,x))^{2}\ dx
-\frac{1}{2}\int_{\mathbb{T}^{d}}(\partial^{\alpha}\mu^{n+1}(0,x))^{2}\ dx
+\int_{0}^{t}\int_{\mathbb{T}^{d}}|D\partial^{\alpha}\mu^{n+1}|^{2}\ dxd\tau
\\
\leq \widetilde{IV}_{A,i}+
\frac{1}{8}\sup_{t\in[0,T]}\|\partial^{\alpha}\mu^{n+1}\|_{0}^{2}
+c\varepsilon T(F(M_{n}))^{2}\left((1+T)(1+N_{n})(1+M_{n})^{s+2}\right).
\end{multline}

Next we estimate the important term, $\widetilde{IV}_{A,i}.$  We use Young's inequality without a small parameter,
and we also use \eqref{newSmall}, finding
\begin{multline}\label{ivaidone}
\widetilde{IV}_{A,i}\leq \frac{\varepsilon\varepsilon_{1}d}{2}(1+\widetilde{M}_{n})^{1/2}
\sum_{i=1}^{d}\int_{0}^{t}\int_{\mathbb{T}^{d}}|\partial^{\alpha}\partial_{x_{i}}\mu^{n+1}|^{2}\ dxd\tau
\\
+
\frac{\varepsilon\varepsilon_{1}d^{2}}{2}(1+\widetilde{M}_{n})^{1/2}\widetilde{N}_{n}.
\end{multline}

By taking the product $\varepsilon\varepsilon_{1}$ small enough, and combining \eqref{ivaidone} into \eqref{allButIVai}, we find
\begin{multline}\nonumber
\frac{1}{2}\int_{\mathbb{T}^{d}}(\partial^{\alpha}\mu^{n+1}(t,x))^{2}\ dx
-\frac{1}{2}\int_{\mathbb{T}^{d}}(\partial^{\alpha}\mu^{n+1}(0,x))^{2}\ dx
\\
+\frac{1}{2}\int_{0}^{t}\int_{\mathbb{T}^{d}}|D\partial^{\alpha}\mu^{n+1}|^{2}\ dxd\tau
\leq
\frac{1}{8}\sup_{t\in[0,T]}\|\partial^{\alpha}\mu^{n+1}\|_{0}^{2}
\\
+c\varepsilon\left(\varepsilon_{1}+ T(F(M_{n}))^{2}\right)\left((1+T)(1+N_{n})(1+M_{n})^{s+2}\right).
\end{multline}
The rest of the proof then follows as in Theorem \ref{existenceTheorem}.
\end{proof}

Finally, we close this section by remarking that we could also prove uniqueness in this case (i.e., the case of a general, non-regularizing
payoff function), just as we proved Theorem \ref{uniquenessTheorem} above.

\subsection{Extension to problems with congestion}

Some other works have considered mean field games with congestion, in which the Hamiltonian has the particular form
with a power of $m$ in the denominator.  The solutions proven to exist in 
\cite{gomesCongestion}, for example, are shown to always satisfy $m>0,$ then.  Our analysis could be extended to 
such Hamiltonians
when the data satisfies $m_{0}>0.$  
In this case, smallness constraints can again be used to keep $m$ positive and to control the
nonlinear evolution.  We now make this precise.

\begin{corollary}\label{congestionTheorem} Let $s>\left\lceil\frac{d+5}{2}\right\rceil$ be given, and  let $\gamma>0$ be given.
Let $u_{T}\in H^{s}$ and $m_{0}\in H^{s-1}$ be given, such that $m_{0}$ satisfies $m_{0}(x)>\gamma$ for all
$x\in\mathbb{T}^{d}.$
Let the Hamiltonian take the form 
\begin{equation}
\mathcal{H}(t,x,m,Du)=\phi(m)H_{1}(t,x,m,Du)+H_{2}(t,x,m,Du),
\end{equation} 
where $H_{1}$ and $H_{2}$ each satisfy
{\bf(H1)} and where $\phi:(0,\infty)\rightarrow\mathbb{R}$ is smooth and satisfies $|\phi(q)|\rightarrow\infty$
as $q\rightarrow0.$  Let $F$ be as above.  If the product $\varepsilon T F$ is sufficiently small,
the system \eqref{uEquation}, \eqref{mEquation}, \eqref{planningBC} has a classical solution
$(u.m)\in L^{\infty}([0,T];H^{s}\times H^{s-1}).$
\end{corollary}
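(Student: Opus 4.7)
The plan is to reduce Corollary \ref{congestionTheorem} to Theorem \ref{existenceTheorem} via a truncation of the singularity in $\phi,$ supplemented by an a posteriori positivity check that exploits the smallness already present.

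First I would construct a truncated Hamiltonian. Fix a smooth $\psi:\mathbb{R}\to\mathbb{R}$ equal to the identity on $[\gamma/2,\infty)$ and constant equal to $\gamma/4$ on $(-\infty,\gamma/4],$ and define $\tilde\phi=\phi\circ\psi,$ which is globally smooth with all derivatives uniformly bounded. Replacing $\phi$ by $\tilde\phi$ in the formula for $\mathcal{H}$ produces a modified Hamiltonian $\widetilde{\mathcal{H}}$ that agrees with $\mathcal{H}$ whenever $m\ge \gamma/2$ and that satisfies {\bf(H1)} with a new non-decreasing $\tilde F$ built from $F$ and the sup-norms of derivatives of $\tilde\phi.$ Applying Theorem \ref{existenceTheorem} to $\widetilde{\mathcal{H}},$ under a smallness of the form $\varepsilon T\tilde F(\,\cdot\,)$ sufficient to trigger {\bf(H2)}, yields a classical solution $(u,m)\in L^{\infty}([0,T];H^{s}\times H^{s-1}).$

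It remains to verify that this solution satisfies $m(t,x)\ge \gamma/2$ pointwise; for then $\tilde\phi(m)=\phi(m)$ identically along the solution and $(u,m)$ is also a classical solution of the original system. I would use the Duhamel representation for $m,$
\begin{equation}\nonumber
m(t,\cdot)=e^{t\Delta}m_0-\varepsilon\int_0^t e^{(t-s)\Delta}\,\mathrm{div}\bigl(m(s,\cdot)\,\widetilde{\mathcal{H}}_p(s,\cdot,m,Du)\bigr)\,ds.
\end{equation}
By the maximum principle on $\mathbb{T}^d,$ $e^{t\Delta}m_0\ge \min_x m_0(x)>\gamma.$ The forcing integral is bounded in $L^\infty$ by $\varepsilon T$ times the $L^\infty$ norm of the divergence; via Sobolev embedding (using $s-2>d/2,$ which holds since $s>\lceil(d+5)/2\rceil$) and Lemma \ref{boundsOnTheta}, this is at most $C\varepsilon T\tilde F(2\mathcal{S})(1+2\mathcal{S})^{s/2}$ where $\mathcal{S}$ is the quantity appearing in Theorem \ref{existenceTheorem}. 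Strengthening the smallness to also enforce $C\varepsilon T\tilde F(2\mathcal{S})(1+2\mathcal{S})^{s/2}<\gamma/2$ — still a condition of the form ``$\varepsilon T F$ sufficiently small'' as asserted in the corollary — delivers $m\ge \gamma/2$ on $[0,T]\times\mathbb{T}^d.$

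The main obstacle is this positivity step: one must simultaneously satisfy {\bf(H2)} for the truncated system and a quantitative positivity threshold, so the implicit constant absorbed into ``$\varepsilon T F$ sufficiently small'' depends not merely on $F$ but also on $\gamma,$ on $\min_x m_0(x)-\gamma,$ and on the polynomial factors inherited from $\mathcal{S}.$ The truncation itself is routine and the existence portion is a black-box invocation of Theorem \ref{existenceTheorem}, so once the positivity estimate is in place the remainder of the proof proceeds without further difficulty.
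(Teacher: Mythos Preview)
Your proposal is correct and follows essentially the same approach as the paper: truncate the singular factor $\phi$ via a smooth cutoff $\psi$ so that the modified Hamiltonian satisfies {\bf(H1)}, invoke Theorem \ref{existenceTheorem}, and then use the Duhamel formula together with the maximum principle and Sobolev embedding to show the solution stays above the truncation threshold, whence the modified and original Hamiltonians coincide along the solution. The only cosmetic difference is that the paper keeps a free parameter $\eta\in(0,1)$ (requiring $m>\eta\gamma$ and the Duhamel remainder below $(1-\eta)\gamma$) whereas you fix $\eta=1/2$; also, your aside that the smallness constant depends on $\min_x m_0(x)-\gamma$ is unnecessary, since both arguments use only $e^{t\Delta}m_0>\gamma$ from the maximum principle.
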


So, for example, we could let $H_{1}$ and $H_{2}$
be given by the examples discussed in Remark \ref{existenceRemark} or Remark \ref{uniquenessRemark}, for instance, and we could let
$\phi(m)$ be given by $\ln(m)$ or $m^{\sigma}$ for any $\sigma<0.$  The proof of Corollary \ref{congestionTheorem}
is a brief application of our main theorem and further application of our smallness conditions; we now provide this
proof.

\begin{proof}
Let $\eta\in(0,1)$ be given.  We want to maintain the property that $m(t,x)>\eta\gamma$ for all $x$ and $t.$
We introduce a cutoff function $\psi,$ such that $\psi:\mathbb{R}\rightarrow\mathbb{R}$ is smooth and monotone,
such that $\psi(\beta)=\eta\gamma/4$ for $\beta\leq\eta\gamma/4,$ and $\psi(\beta)=\beta$ for $\beta\geq\eta\gamma/2.$
We introduce a modified Hamiltonian, $\bar{H},$ defined by
\begin{equation}\nonumber
\bar{H}(t,x,m,Du)=\phi(\psi(m))H_{1}(t,x,m,Du)+H_{2}(t,x,m,Du).
\end{equation}
Then the modified Hamiltonian $\bar{H}$ satisfies {\bf{(H1)}}, and thus Theorem \ref{existenceTheorem} applies.  Thus, for 
$\varepsilon TF$ sufficiently small, there exists a solution to the problem
\begin{equation}\nonumber
u_{t}+\Delta u+\varepsilon \bar{H}(t,x,m,Du)=0,
\end{equation}
\begin{equation}\label{modifiedMEquation}
m_{t}-\Delta m + \varepsilon\mathrm{div}(m\bar{H}_{p}(t,x,m,Du))=0,
\end{equation}
with data given by \eqref{planningBC}.

We now consider \eqref{modifiedMEquation} in more detail; we write the Duhamel
formula for this:
\begin{equation}\nonumber
m(t,\cdot)=e^{-\Delta t}m_{0}-\varepsilon\int_{0}^{t}e^{-\Delta(t-s)}\mathrm{div}(m\bar{H}_{p}(s,\cdot,m, Du)\ ds.
\end{equation}
We know from the maximum principle that $(e^{\Delta t}m_{0})(t,x)>\gamma$ for all $t$ and $x.$
To show that $m(t,x)>\eta\gamma$ for all $x$ and $t,$ it is sufficient to demonstrate
\begin{equation}\nonumber
\left|\varepsilon\int_{0}^{t}e^{-\Delta(t-s)}\mathrm{div}(m\bar{H}_{p}(t,\cdot,m,Du)\ ds\right|_{\infty}\leq (1-\eta)\gamma,
\end{equation}
for all $t.$  For $k>d/2,$ by Sobolev embedding and the triangle inequality we have
\begin{multline}\nonumber
\left|\varepsilon\int_{0}^{t}e^{-\Delta(t-s)}\mathrm{div}(m\mathcal{H}_{p}(t,\cdot,m,Du))\ ds\right|_{\infty}\\
\leq\left\|\varepsilon\int_{0}^{t}e^{-\Delta(t-s)}\mathrm{div}(m\mathcal{H}_{p}(t,\cdot,m,Du))\ ds\right\|_{H^{k}}
\\
\leq\varepsilon\int_{0}^{t}\|e^{-\Delta(t-s)}\mathrm{div}(m\mathcal{H}_{p}(t,\cdot,m,Du))\|_{H^{k}}\ ds.
\end{multline}
Then using the action of the heat semigroup on Sobolev spaces and making an elementary supremum inequality, we 
have
\begin{multline}\nonumber
\left|\varepsilon\int_{0}^{t}e^{-\Delta(t-s)}\mathrm{div}(m\mathcal{H}_{p}(t,\cdot,m,Du))\ ds\right|_{\infty}
\\
\leq \varepsilon\int_{0}^{t}\|\mathrm{div}(m\mathcal{H}_{p}(t,\cdot,m,Du))\|_{H^{k}}\ ds
\leq\varepsilon T \sup_{t\in[0,T]}\|m\mathcal{H}_{p}(t,\cdot,m,Du)\|_{H^{k+1}}.
\end{multline}
So, as long as
\begin{equation}\label{conditionCongestion}
\varepsilon T \sup_{t\in[0,T]}\|m\mathcal{H}_{p}(t,\cdot,m,Du)\|_{H^{k+1}} \leq (1-\eta)\gamma,
\end{equation}
we know that $m(t,x)>\eta\gamma$ for all $t$ and $x.$  To be definite, we take $k$ to be the integer ceiling of $(d+1)/2.$

We know that the solutions guaranteed to exist by Theorem \ref{existenceTheorem} satisfy 
\begin{equation}\nonumber
\sup_{t\in[0,T]}\|Du\|_{s-1}^{2}+\|\mu\|_{s-1}^{2}\leq 8\left(\|Du_{T}\|_{s-1}^{2}+\|\mu_{0}\|_{s-1}^{2}\right).
\end{equation}
By Lemma \ref{boundsOnTheta}, then, we see that there exists a constant $\bar{K}>0$ such that 
\begin{equation}\nonumber
\sup_{t\in[0,T]}\|mH_{p}(t,\cdot,m,Du)\|_{H^{k+1}}\leq \bar{K}\left(\varepsilon T
F(8\|Du_{T}\|_{s-1}^{2}+8\|\mu_{0}\|_{s-1}^{2})\right).
\end{equation}
Thus, if the product $\varepsilon T F$ is sufficiently small, we do have that condition \eqref{conditionCongestion}
is satisfied.  Therefore $m(t,x)>\eta\gamma$ for all $t$ and $x,$ and thus $\psi(m)=m.$  Since $\psi(m) =m,$
we have $\tilde{H}(t,x,m,Du)=\mathcal{H}(t,x,m,Du)$ for our solutions $(u,m).$  This completes the proof.
\end{proof}

\subsection{Final remarks}

Future work to be done includes studying questions regarding regularity and uniqueness.
For questions of regularity, this includes both lowering regularity requirements on the data to get
results such as those in the present work, and also inferring still higher regularity of solutions
such as those we have proved to exist.  The uniqueness question also certainly deserves further
attention; while we have presented a uniqueness theorem with a smallness constraint, as have
Bardi and Cirant \cite{bardiCirant}, as we have discussed in the introduction, Bardi and Fischer and Cirant and Tonon
have given examples of non-uniqueness \cite{bardiFischer}, \cite{cirantTonon}.  Understanding when solutions are 
and are not unique and characterizing the mutiple possible solutions is an important problem to
be studied.

\section*{Acknowledgments}
The author gratefully acknowledges support from the National Science Foundation through 
grants  DMS-1515849 and DMS-1907684.  The author is also grateful to the Institute for Computational and Experimental Research
in Mathematics (ICERM) at Brown University, for hosting him while this paper was substantially completed.
The author also is grateful to the Institute for Pure and Applied Mathematics (IPAM) at University of California, Los Angeles
for hosting him while the article was finished.
\bibliography{ambroseMeanField}{}
\bibliographystyle{plain}

\end{document}